\documentclass[11pt]{article}

\usepackage[a4paper,margin=1in]{geometry}
\usepackage{amsmath,amssymb,amsthm,mathtools}
\usepackage{enumitem}
\usepackage{microtype}
\usepackage[hidelinks]{hyperref}

\newtheorem{theorem}{Theorem}[section]
\newtheorem{proposition}[theorem]{Proposition}
\newtheorem{lemma}[theorem]{Lemma}

\newtheorem{remark}[theorem]{Remark}

\newcommand{\N}{\mathbb N}
\newcommand{\Z}{\mathbb Z}
\newcommand{\cX}{\mathcal X}
\newcommand{\cE}{\mathcal E}
\newcommand{\cP}{\mathcal P}
\newcommand{\Prob}{\mathbb P}
\newcommand{\one}{\mathbf 1}
\newcommand{\io}{\mathrm{i.o.}}
\newcommand{\law}{\mathcal L}

\title{Weak but Not Strong Asymptotic Testability}
\author{Senhan Yao}
\date{July 29, 2026}

\begin{document}
\maketitle

\begin{abstract}
We construct two fixed disjoint families $H_0,H_1$ of stationary ergodic binary process distributions for which a weakly asymptotically consistent test exists, but no strongly asymptotically consistent test exists.  The construction combines a synchronizing binary suspension code, an independent i.i.d.\ marker process, and countably many independent slowly switching two-state Markov chains. In particular, this disproves the asymptotic-consistency branch of a conjecture of Ryabko.
\end{abstract}

\section{Definitions and main result}\label{sec:definitions}

Throughout,
\[
  \N:=\{1,2,3,\ldots\},
  \qquad
  \N_0:=\{0,1,2,\ldots\}.
\]
Let
\[
  \cX:=\{0,1\}^{\N},
  \qquad
  \overline{\cX}:=\{0,1\}^{\Z},
\]
each equipped with its product Borel sigma-field.  Let
$T:\cX\to\cX$ and $\overline T:\overline{\cX}\to\overline{\cX}$ be
the left shifts,
\[
  (Tx)_i=x_{i+1}\quad(i\in\N),
  \qquad
  (\overline T x)_i=x_{i+1}\quad(i\in\Z).
\]
Let $\cE$ denote the set of all $T$-invariant and $T$-ergodic Borel
probability measures on $\cX$.  Here a $T$-invariant probability
measure $P$ is called $T$-ergodic when every Borel set $A$ satisfying
$P(A\mathbin\triangle T^{-1}A)=0$ has $P(A)\in\{0,1\}$.  For
$x=(x_1,x_2,\ldots)\in\cX$, write $x_1^n=(x_1,\ldots,x_n)$.

We use the following standard terminology throughout.  A
\emph{probability-preserving system} is a quadruple
$(\Omega,\mathcal F,\mu,S)$ in which $(\Omega,\mathcal F,\mu)$ is a
probability space, $S:\Omega\to\Omega$ is measurable, and
$\mu(S^{-1}A)=\mu(A)$ for every $A\in\mathcal F$.  It is
\emph{mixing} if
\[
  \mu(A\cap S^{-m}B)\longrightarrow\mu(A)\mu(B)
  \qquad(m\to\infty)
\]
for all fixed $A,B\in\mathcal F$.  Throughout this paper, the word
``mixing'' refers only to this measure-theoretic two-set condition for a
single transformation.  It does not mean Rosenblatt (or strong)
$\alpha$-mixing, which takes a supremum over past and future
sigma-fields; no mixing coefficient and no rate of mixing is assumed.
A measurable map
$\Phi:(\Omega,\mathcal F,\mu,S)\to(\Omega',\mathcal F',\nu,R)$ is a
\emph{factor map} if $\nu=\Phi_\#\mu$ and
$\Phi\circ S=R\circ\Phi$ almost surely.  Here the pushforward is
defined by
\[
  (\Phi_\#\mu)(C):=\mu(\Phi^{-1}C),\qquad C\in\mathcal F'.
\]

A test is a sequence of measurable functions
\[
  \varphi_n:\{0,1\}^n\longrightarrow\{0,1\},\qquad n\in\N.
\]
Let $H_0,H_1\subset\cE$ be disjoint; they need not exhaust $\cE$.
No measurability assumption on $H_0$ or $H_1$ as a subset of the space
of process laws is required by the definitions below.
The test $(\varphi_n)$ is \emph{weakly asymptotically consistent} for
$H_0$ against $H_1$ if, for every $i\in\{0,1\}$ and every
$\rho\in H_i$,
\[
  \rho\{x:\varphi_n(x_1^n)\ne i\}\longrightarrow0.
\]
It is \emph{strongly asymptotically consistent} if, for every
$i\in\{0,1\}$ and every $\rho\in H_i$,
\[
  \rho\!\left\{x:\exists N\in\N\ \forall n\ge N,
  \ \varphi_n(x_1^n)=i\right\}=1.
\]
The event in the preceding display is measurable because it equals
$\bigcup_{N\ge1}\bigcap_{n\ge N}
\{x:\varphi_n(x_1^n)=i\}$.  Because the test values are binary, the
condition is equivalent to $\varphi_n(X_1^n)\to i$ almost surely under
every $\rho\in H_i$.  For any process law $P$, test $\psi$, and
$i\in\{0,1\}$, we use the shorthand
\[
  P\{\psi_n=i\}:=P\{x:\psi_n(x_1^n)=i\}.
\]
No convergence rate and no uniformity over either hypothesis is
required.  These are precisely the weak and strong asymptotic notions
relevant to the asymptotic part of Ryabko's Conjecture~5.1
\cite{Ryabko2019}.

\begin{remark}[Scope of the counterexample]\label{rem:scope}
The implication refuted in this paper is the unrestricted assertion
\[
 \text{``for every pair of disjoint sets }H_0,H_1\subset\cE,
 \text{ weak consistency implies strong consistency.''}
\]
Accordingly, the definitions above impose no Borel, closedness, or
topological condition on the hypotheses as subsets of a space of
probability laws.  The theorem below makes no claim about variants of
the implication in which additional regularity assumptions are imposed
on $H_0$ and $H_1$.  This scope condition is part of the statement being
proved, not an unmentioned convention.
\end{remark}

\begin{theorem}[Main theorem]\label{thm:main}
There exist fixed disjoint sets $H_0,H_1\subset\cE$ such that
\begin{enumerate}[label=\textup{(\roman*)}]
\item a weakly asymptotically consistent test exists for $H_0$ against
      $H_1$;
\item no strongly asymptotically consistent test exists for $H_0$
      against $H_1$.
\end{enumerate}
Consequently, weak asymptotic consistency does not imply strong
asymptotic consistency for arbitrary pairs of stationary ergodic
hypotheses.
\end{theorem}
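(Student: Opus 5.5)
The plan is to build the hypotheses from countably many independent slowly switching two-state Markov chains, coupled to a visible marker process and pushed into a single binary stationary ergodic process by a synchronizing suspension code, as the abstract indicates. Concretely, for each $k\in\N$ let $Y^{(k)}$ be a stationary two-state Markov chain on $\{0,1\}$ whose switching probability $\varepsilon_k\downarrow0$ is extremely small, and let $M$ be an independent i.i.d.\ marker process that selects which coordinate $k$ is displayed at each block. The parameter distinguishing the hypotheses is a sequence $\theta=(\theta_k)$ encoding the stationary weights of the chains. $H_1$ consists of laws in which some chain is biased in a way that shows up only at scales of order $1/\varepsilon_k$. $H_0$ consists of laws in which no chain is biased. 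The laws are chosen so that $H_0\cap H_1=\emptyset$ but each law in $H_1$ is a limit, at every finite time scale, of laws in $H_0$. Ergodicity, in fact mixing, of the combined system follows because a product of mixing systems is mixing; each Markov chain with $0<\varepsilon_k<1$ is mixing, and the i.i.d.\ marker is mixing. The synchronizing binary suspension code is then a factor map into $\{0,1\}^{\Z}$, restricted to $\cX$, whose image is stationary and ergodic. Synchronization lets a finite observer decode the block structure, and hence the displayed values $Y^{(k)}_t$ together with their index $k$, after a finite waiting time.

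For (i), the idea is that weak consistency only requires, for each fixed $\rho$, the error probability at time $n$ to vanish. The test will decode the sample, look at the chains $k\le K(n)$ for a slowly growing $K(n)$, and estimate the empirical frequencies of each visible chain. It outputs $1$ if some chain's empirical behaviour deviates from the $H_0$ profile by a threshold $\delta(n)\to0$. For a fixed $\rho\in H_1$ with bias in chain $k_0$, the ergodic theorem for the Markov chain (with $\varepsilon_{k_0}$ fixed) gives detection with probability tending to $1$. For $\rho\in H_0$ we need a union bound over $k\le K(n)$. To control this, the construction makes the chains sampled rarely enough, via the marker, that chains with $1/\varepsilon_k\gg n$ are essentially frozen. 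A frozen chain gives a symmetric random value in $H_0$, so the test must ignore chains whose switching time exceeds the sample length; $K(n)$ is tuned to achieve this.

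For (ii), I would argue by contradiction with a Baire/diagonal construction in the style of Ryabko's impossibility proofs. Suppose $(\varphi_n)$ is strongly consistent. Starting from some $\rho_0\in H_0$, almost surely $\varphi_n\to0$, so there is a time $N_1$ after which $\varphi_n=0$ with probability $\ge1-2^{-1}$. Now modify the chain index $k_1$ with $1/\varepsilon_{k_1}\gg N_1$: switching it into a biased state yields a law $\rho_1\in H_1$ indistinguishable from $\rho_0$ up to time $N_1$ in total variation. Since $\varphi_n\to1$ under $\rho_1$, there is $N_2$ with $\varphi_{N_2}=1$ with high probability. We then flip back with a slower chain $k_2$ to obtain $\rho_2\in H_0$, and so on. The limit law $\rho_\infty$ (all modifications done simultaneously, independent chains making this a product) must lie in $H_0\cup H_1$ by design of the sets. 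Exploiting the freedom in Remark~\ref{rem:scope}, I would define $H_0,H_1$ so that such limit parameters belong to one of them. Yet under $\rho_\infty$ the test oscillates infinitely often with positive probability, via Borel–Cantelli on the summable total-variation errors, contradicting strong consistency.

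The main obstacle is reconciling (i) and (ii). The limit law $\rho_\infty$ must be in a hypothesis, yet the weak test must still succeed on it. This works because weak consistency tolerates the oscillation as long as its frequency in probability vanishes, whereas strong consistency does not. I would first try defining membership via an infinite, sparse pattern of biased chains with density controlled so that at each time $n$ the relevant chain deviation is detectable with probability $\to1$, yet alternations occur along a sparse sequence of times.
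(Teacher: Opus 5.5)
\textbf{There is a genuine gap, and it sits exactly at the ``main obstacle'' you name.}

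\emph{Your part (ii) argument uses only weak consistency.} Total-variation closeness of $\rho_\infty$ to $\rho_j$ holds only on prefixes of length $N_{j+1}$. So the ``for all $n\ge N_j$'' content of strong consistency never reaches $\rho_\infty$. What you actually use is the single-time statement $\rho_j\{\varphi_{N_{j+1}}=i_j\}\approx 1$, and that is weak consistency. Summable TV errors then give $\rho_\infty\{\varphi_{N_{j+1}}=i_j\}\approx1$ for all large $j$. Hence $\rho_\infty\{\varphi_{N_j}=1\}$ alternates between values near $0$ and near $1$. If $\rho_\infty\in H_0\cup H_1$, this contradicts mere weak consistency under $\rho_\infty$. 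Running your construction against the test you build in (i) would therefore refute (i). In general, if the limit law imitates the alternating stage laws at times $N_j$ with probability bounded away from zero, the argument kills weak consistency too. To separate the two notions, that imitation probability must tend to $0$.

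\emph{The hypotheses are also never fixed.} With ``$H_0$: no chain biased'', a law that keeps chain $k_1$ biased cannot be returned to $H_0$ by modifying another chain $k_2$. And ``define $H_0,H_1$ so that such limit parameters belong to one of them'' is circular: $\rho_\infty$ depends on the test, while the pair must be fixed before any test is given.

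\textbf{The missing idea.} The final null law should imitate alternatives only on independent events whose probabilities tend to $0$ but are not summable. Then the second Borel--Cantelli lemma forces infinitely many errors almost surely, while the error probability at each time can still vanish. The paper does this as follows.
\begin{itemize}
\item It fixes $H_i:=\{P\in\cP:q_k(P)\to i\}$. Here $q_k(P)$ is the conditional probability that a decoded word with marker $k$ carries bit $1$; by synchronization this is intrinsic to the output law.
\item The weak test reads one decoded bit with marker $k_n\approx\tfrac12\log_2 n$. Its error is at most $q_{k_n}$ under $H_0$ and at most $1-q_{k_n}+o(1)$ under $H_1$.
\item Against a putative strong test $\Gamma$, it builds $Q_s\in H_1$: earlier blocks are driven by chains $Z^r$, and all coordinates $k\ge a_s$ equal $1$. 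It then chooses $n_s$ with $Q_s\{\Gamma_{n_s}=1\}>1-\varepsilon_s$.
\item It picks a marker cutoff $b_s$ so that, with high probability, no marker above $b_s$ appears before time $n_s$.
\item It adds an independent chain $Z^s$ on $I_s=[a_s,b_s]$ with stationary one-probability $p_s=1/(s+1)$ and persistence $(1-\delta_s)^{n_s}>1-\varepsilon_s$.
\end{itemize}
The final law has $q_k\to0$, so it lies in $H_0$. Yet on $E_s=\{Z^s_t=1,\ 0\le t\le n_s\}$, intersected with the marker event, its first $n_s$ symbols coincide exactly with those of $Q_s$. Since the $E_s$ are independent with $\Prob(E_s)\approx p_s$ and $\sum_s p_s=\infty$, $\Gamma_{n_s}=1$ infinitely often almost surely. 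The error forced at time $n_s$ has probability only of order $p_s\to0$, which is compatible with weak consistency.

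In your language: slow switching should supply persistence of an ``all ones'' phase over the window $[0,n_s]$. The bias should be carried by stationary weights tending to $0$ non-summably, not by total-variation indistinguishability.
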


\section{Measure-theoretic conventions and auxiliary results}
\label{sec:standard-results}

All products below carry their product sigma-fields, and all subsets of
suspension spaces carry the corresponding trace sigma-fields.  Random
variables defined on product spaces are understood through the relevant
coordinate projections.  Equalities and invariance statements involving
measurable sets are interpreted modulo null sets unless exact equality is
explicitly asserted.

The underlying measure theory is ordinary countably additive probability.
Every measurable space used below is equipped explicitly with its
sigma-field: finite or countable discrete spaces carry their full
sigma-fields, sequence spaces carry product sigma-fields, and suspension
spaces carry trace sigma-fields.  No compactness theorem, regular
conditional-probability theorem, or other unlisted topological
regularity result is used.

We take the following elementary measure-theoretic facts as foundational:
the construction and uniqueness clause of the Carath\'eodory extension
theorem; existence of finite product measures; continuity from above and
below for finite measures; the monotone convergence theorem; and
Tonelli--Fubini for nonnegative or integrable functions.  In particular,
if two finite measures agree on an algebra that generates the ambient
sigma-field, the uniqueness clause of the extension theorem implies that
they agree on the generated sigma-field.  Whenever a countable product
law is used below, its existence is either constructed explicitly from
Theorem~\ref{thm:kolmogorov-extension} or is already part of the
hypotheses of the relevant statement.  Beyond these foundational
facts, the principal probabilistic and ergodic ingredients used in the
main construction are stated and proved in the form required.

\begin{theorem}[Bounded convergence theorem]\label{thm:bounded-convergence}
Let $(\Omega,\mathcal F,\mu)$ be a probability space.  If measurable
functions $f_n:\Omega\to\mathbb R$ satisfy $f_n\to f$ $\mu$-almost
surely and there is a finite constant $C$ such that
$|f_n|\le C$ $\mu$-almost surely for every $n$, then $f$ is integrable
and
\[
  \int f_n\,d\mu\longrightarrow\int f\,d\mu.
\]
\end{theorem}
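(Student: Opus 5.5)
The plan is to reduce to the monotone convergence theorem, which the paper takes as foundational, by passing to nonnegative functions. First, integrability of $f$. Since $|f_n|\le C$ almost surely for every $n$ and $f_n\to f$ almost surely, the countable union of the exceptional null sets is still null. Off this union we have $|f|=\lim|f_n|\le C$. We may redefine $f$ (and each $f_n$) to be $0$ on this null set; this changes neither the measurability of $f$ (it is an a.s.\ limit of measurable functions, taken as $\limsup f_n$ off the null set) nor any integral. Then $|f|\le C$ everywhere, so $\int|f|\,d\mu\le C<\infty$, and $f$ is integrable.

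Next, the convergence. Set $g_n:=\sup_{k\ge n}|f_k-f|$, which is measurable, satisfies $0\le g_n\le 2C$, and decreases pointwise to $\limsup_k|f_k-f|=0$. Put $h_n:=2C-g_n$. Then $0\le h_n$, and $h_n$ increases to the constant $2C$. The monotone convergence theorem gives $\int h_n\,d\mu\to 2C$, because $\mu$ is a probability measure and so $\int 2C\,d\mu=2C$. Since all quantities are finite, this is equivalent to $\int g_n\,d\mu\to0$. Finally,
\[
\Bigl|\int f_n\,d\mu-\int f\,d\mu\Bigr|\le\int|f_n-f|\,d\mu\le\int g_n\,d\mu\longrightarrow0,
\]
which is the claim.

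The only delicate point is the handling of null sets, namely making the bounds hold everywhere so that the monotone sequence is genuinely nonnegative and increasing. This is routine because only countably many null sets occur. No step is a real obstacle. The key idea is that finiteness of $\mu$ allows one to subtract from the constant $2C$, which turns the decreasing sequence into an increasing one.
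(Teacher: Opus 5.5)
Your proof is correct and follows essentially the same route as the paper: modify everything on a single null set, introduce the decreasing envelope $g_m=\sup_{n\ge m}|f_n-f|\le 2C$, and show $\int g_m\,d\mu\to0$. The only difference is the last step. You get $\int g_m\,d\mu\to0$ by applying monotone convergence to $2C-g_m$. The paper instead uses continuity from above on $\{g_m>\epsilon\}$ together with the split $\int|f_n-f|\,d\mu\le\epsilon+2C\mu\{g_m>\epsilon\}$. Both tools are among the paper's stated foundational facts.
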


\begin{proof}
After changing all functions on one null set, assume that the convergence
and bounds hold everywhere.  The pointwise limit $f$ is measurable and
$|f|\le C$.  Put
\[
  g_m:=\sup_{n\ge m}|f_n-f|.
\]
Then $g_m$ is measurable, $0\le g_m\le2C$, and $g_m\downarrow0$.
For every $\epsilon>0$, the events $\{g_m>\epsilon\}$ decrease to the
empty set, so continuity from above gives
$\mu\{g_m>\epsilon\}\to0$.  If $n\ge m$, then
\[
 \int|f_n-f|\,d\mu
 \le \epsilon+2C\,\mu\{g_m>\epsilon\}.
\]
First let $m\to\infty$ and then $\epsilon\downarrow0$.  Thus
$\int|f_n-f|\,d\mu\to0$, which implies the asserted convergence of
integrals.
\end{proof}

\begin{theorem}[Borel--Cantelli lemmas]\label{thm:borel-cantelli}
Let $(E_s)_{s\ge1}$ be measurable events in a probability space
$(\Omega,\mathcal F,\Prob)$.
\begin{enumerate}[label=\textup{(\alph*)}]
\item If $\sum_s\Prob(E_s)<\infty$, then
      $\Prob(E_s\ \mathrm{i.o.})=0$.
\item If the events $E_s$ are mutually independent and
      $\sum_s\Prob(E_s)=\infty$, then
      $\Prob(E_s\ \mathrm{i.o.})=1$.
\end{enumerate}
Here $\{E_s\ \mathrm{i.o.}\}:=\bigcap_{m\ge1}\bigcup_{s\ge m}E_s$.
\end{theorem}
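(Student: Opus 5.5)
We prove the two parts of Theorem~\ref{thm:borel-cantelli} separately; each uses only continuity of measure, countable subadditivity, and independence.

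For part (a), fix $m\ge1$. Since $\{E_s\ \mathrm{i.o.}\}\subset\bigcup_{s\ge m}E_s$, countable subadditivity gives
\[
  \Prob(E_s\ \mathrm{i.o.})\le\sum_{s\ge m}\Prob(E_s).
\]
The right-hand side is the tail of a convergent series, so it tends to $0$ as $m\to\infty$. Hence $\Prob(E_s\ \mathrm{i.o.})=0$.

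For part (b), pass to complements. We have
\[
  \{E_s\ \mathrm{i.o.}\}^c=\bigcup_{m\ge1}\bigcap_{s\ge m}E_s^c,
\]
so by countable subadditivity it suffices to show $\Prob\bigl(\bigcap_{s\ge m}E_s^c\bigr)=0$ for each fixed $m$. The sets $\bigcap_{s=m}^{M}E_s^c$ decrease in $M$, so continuity from above gives
\[
  \Prob\Bigl(\bigcap_{s\ge m}E_s^c\Bigr)=\lim_{M\to\infty}\Prob\Bigl(\bigcap_{s=m}^{M}E_s^c\Bigr).
\]

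The one point needing care is that mutual independence of the events $E_s$ implies mutual independence of their complements $E_s^c$. We establish this by induction on the number of complemented events, using
\[
  \Prob(A\cap E^c)=\Prob(A)-\Prob(A\cap E)
\]
for each finite intersection $A$ of the remaining events.

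With this in hand,
\[
  \Prob\Bigl(\bigcap_{s=m}^{M}E_s^c\Bigr)=\prod_{s=m}^{M}\bigl(1-\Prob(E_s)\bigr)\le\exp\Bigl(-\sum_{s=m}^{M}\Prob(E_s)\Bigr),
\]
where the inequality uses $1-t\le e^{-t}$ for real $t$. Because $\sum_s\Prob(E_s)=\infty$, the exponent tends to $-\infty$ as $M\to\infty$, so the bound tends to $0$.

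Therefore $\Prob\bigl(\bigcap_{s\ge m}E_s^c\bigr)=0$ for every $m$. A countable union of null sets is null, so $\Prob(E_s\ \mathrm{i.o.})=1$. This completes both parts.
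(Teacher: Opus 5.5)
Your proof is correct and follows essentially the same argument as the paper. Part (a) bounds $\Prob(E_s\ \mathrm{i.o.})$ by the tail $\sum_{s\ge m}\Prob(E_s)$, and part (b) uses $\prod_{s=m}^{M}(1-\Prob(E_s))\le\exp(-\sum_{s=m}^{M}\Prob(E_s))$, continuity from above, and the fact that a countable union of null sets is null; your induction showing that the complements $E_s^c$ are again mutually independent makes explicit a step the paper uses without comment.
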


\begin{proof}
For part~(a), the union bound gives
\[
 \Prob\!\left(\bigcup_{s\ge m}E_s\right)
 \le\sum_{s\ge m}\Prob(E_s)\longrightarrow0.
\]
The events on the left decrease with $m$, so continuity from above gives
$\Prob(E_s\ \mathrm{i.o.})=0$.

For part~(b), independence and $1-u\le e^{-u}$ give, for $n\ge m$,
\[
 \Prob\!\left(\bigcap_{s=m}^n E_s^c\right)
 =\prod_{s=m}^n(1-\Prob(E_s))
 \le \exp\!\left(-\sum_{s=m}^n\Prob(E_s)\right)\longrightarrow0.
\]
Continuity from above therefore yields
$\Prob(\bigcap_{s\ge m}E_s^c)=0$ for every $m$.  The event that only
finitely many $E_s$ occur is
$\bigcup_{m\ge1}\bigcap_{s\ge m}E_s^c$, a countable union of null
sets.  Its complement is $\{E_s\ \mathrm{i.o.}\}$.
\end{proof}

\begin{lemma}[Finite-horizon maximal ergodic lemma]
\label{lem:maximal-ergodic}
Let $(\Omega,\mathcal F,\mu,S)$ be probability preserving, let
$h\in L^1(\mu)$, and put
\[
 H_m:=\sum_{i=0}^{m-1}h\circ S^i,
 \qquad
 E_N:=\left\{\max_{1\le m\le N}H_m>0\right\}.
\]
Then $\int_{E_N}h\,d\mu\ge0$.
\end{lemma}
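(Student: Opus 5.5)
I will follow the classical Garsia argument, adapted to the finite horizon $N$. For $N\ge1$ define the running maximum
\[
 M_N:=\max_{0\le m\le N}H_m,\qquad H_0:=0,
\]
so that $M_N\ge0$, $M_N$ is integrable (it is bounded by $\sum_{i=0}^{N-1}|h|\circ S^i$, and each term is integrable by invariance of $\mu$), and $E_N=\{M_N>0\}$. The key pointwise inequality I will aim for is
\[
 h\ \ge\ M_N-M_N\circ S\qquad\text{on }E_N.
\]

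To prove it, I use the shifted sums $H_m\circ S=H_{m+1}-h$. For every $0\le m\le N-1$ this gives
\[
 H_{m+1}=h+H_m\circ S\le h+M_N\circ S.
\]
Hence $\max_{1\le m\le N}H_m\le h+M_N\circ S$ everywhere. On $E_N$ the maximum defining $M_N$ is positive, so it is attained at some $m\ge1$. Therefore $M_N=\max_{1\le m\le N}H_m\le h+M_N\circ S$ on $E_N$, which is the claimed inequality.

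Integrating over $E_N$ gives
\[
 \int_{E_N}h\,d\mu\ \ge\ \int_{E_N}M_N\,d\mu-\int_{E_N}M_N\circ S\,d\mu.
\]
Since $M_N=0$ off $E_N$, the first integral equals $\int M_N\,d\mu$. Since $M_N\circ S\ge0$, the second is at most $\int M_N\circ S\,d\mu$. By invariance of $\mu$ under $S$, this equals $\int M_N\,d\mu$: first check it for indicators via $\mu(S^{-1}A)=\mu(A)$, then extend by simple functions and monotone convergence. The two terms cancel, so $\int_{E_N}h\,d\mu\ge0$.

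The only delicate points are bookkeeping rather than substance. First, $M_N$ is finite and integrable, so the subtraction above is legitimate and no $\infty-\infty$ occurs. Second, the inclusion of $m=0$ is exactly what makes $M_N\ge0$ and lets us discard the complement of $E_N$. I expect the main (mild) obstacle to be justifying $\int M_N\circ S\,d\mu=\int M_N\,d\mu$ from the bare definition of probability-preserving. The measurability of $E_N$ and $M_N$ is immediate, since they are built from finitely many measurable functions.
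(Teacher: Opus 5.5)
Your proposal is correct and is essentially the paper's own proof: you use the same running maximum $\max(0,H_1,\ldots,H_N)$, derive the same pointwise bound $M_N\le h+M_N\circ S$ on $E_N$, and cancel the two integrals by $M_N=0$ off $E_N$, $M_N\circ S\ge0$, and invariance of $\mu$. Your check that $M_N$ is integrable, via $0\le M_N\le\sum_{i=0}^{N-1}|h|\circ S^i$, is a detail the paper leaves implicit.
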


\begin{proof}
Set $H_N^*:=\max(0,H_1,\ldots,H_N)$.  If $x\in E_N$, then every
$H_m(x)$ is at most $h(x)+H_N^*(Sx)$, and consequently
$H_N^*(x)\le h(x)+H_N^*(Sx)$.  Since $H_N^*=0$ on $E_N^c$,
invariance of $\mu$ gives
\begin{align*}
 \int_{E_N}h\,d\mu
 &\ge \int_{E_N}H_N^*\,d\mu
      -\int_{E_N}H_N^*\circ S\,d\mu\\
 &\ge \int H_N^*\,d\mu-\int H_N^*\circ S\,d\mu=0.
\end{align*}
\end{proof}

\begin{lemma}[Maximal average inequality]\label{lem:maximal-average}
Under the hypotheses of Lemma~\ref{lem:maximal-ergodic}, define
$A_mh:=m^{-1}\sum_{i=0}^{m-1}h\circ S^i$.  For every $\epsilon>0$,
\[
 \mu\left\{\sup_{m\ge1}|A_mh|>\epsilon\right\}
 \le \frac{2\|h\|_{L^1(\mu)}}{\epsilon}.
\]
\end{lemma}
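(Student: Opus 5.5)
The plan is to derive the inequality from Lemma~\ref{lem:maximal-ergodic} in two steps. First we handle the positive part $h^+$, then we treat $h$ by splitting it into positive and negative parts. Since $\sup_{m\ge1}|A_mh|\le\sup_{m\ge1}A_m h^+ + \sup_{m\ge1}A_m h^-$, where $h^\pm\ge0$ and $\|h^+\|_{L^1}+\|h^-\|_{L^1}=\|h\|_{L^1}$, it suffices to prove the one-sided bound
\[
 \mu\Big\{\sup_{m\ge1}A_m g>\epsilon\Big\}\le\frac{\|g\|_{L^1(\mu)}}{\epsilon}
\]
for every nonnegative $g\in L^1(\mu)$. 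Granting this, the event $\{\sup_m|A_mh|>\epsilon\}$ is contained in $\{\sup_mA_mh^+>\epsilon\}\cup\{\sup_mA_mh^->\epsilon\}$, because $|A_mh|\le\max(A_mh^+,A_mh^-)$. The union bound then gives $(\|h^+\|_{L^1}+\|h^-\|_{L^1})/\epsilon\le 2\|h\|_{L^1}/\epsilon$. This is in fact better than the stated constant, so some slack is available.

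For the one-sided bound, fix $N$ and apply Lemma~\ref{lem:maximal-ergodic} to the integrable function $g-\epsilon$. Its Birkhoff sums are $H_m=m(A_mg-\epsilon)$, so the set $E_N$ of that lemma is exactly $\{\max_{1\le m\le N}A_mg>\epsilon\}$. The lemma gives $\int_{E_N}(g-\epsilon)\,d\mu\ge0$, which rearranges to
\[
 \epsilon\,\mu(E_N)\le\int_{E_N}g\,d\mu\le\|g\|_{L^1(\mu)},
\]
where the last step uses $g\ge0$. The sets $E_N$ increase in $N$, and their union is $\{\sup_{m\ge1}A_mg>\epsilon\}$. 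Continuity from below therefore transfers the bound to the supremum over all $m$.

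The only points needing care are routine. The sets involved are measurable, since each is a finite maximum or a countable supremum of measurable functions. The strict inequality in $E_N$ matches the strict inequality $>\epsilon$ in the claim, so no limiting argument at the boundary is needed. I expect no real obstacle; the one nonmechanical step is recognising that Lemma~\ref{lem:maximal-ergodic} should be applied to the shifted function $g-\epsilon$ rather than to $g$ itself.
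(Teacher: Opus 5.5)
Your proof is correct and is essentially the paper's argument: apply Lemma~\ref{lem:maximal-ergodic} to a function shifted by $-\epsilon$ on the finite-horizon sets, let $N\to\infty$ by continuity from below, and finish with a union bound. The only difference is that the paper applies this directly to $h-\epsilon$ and $-h-\epsilon$, bounding each $\int_{E_N^\pm}(\pm h)\,d\mu$ by $\|h\|_{L^1(\mu)}$. Your split into $h^+$ and $h^-$, together with $|A_mh|\le\max(A_mh^+,A_mh^-)$, instead yields the sharper constant $\|h\|_{L^1(\mu)}/\epsilon$.
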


\begin{proof}
For fixed $N$, apply Lemma~\ref{lem:maximal-ergodic} to
$h-\epsilon$.  On
$E_N^+:=\{\max_{1\le m\le N}A_mh>\epsilon\}$ this gives
\[
 \epsilon\mu(E_N^+)\le\int_{E_N^+}h\,d\mu\le\|h\|_1.
\]
Applying the same argument to $-h-\epsilon$ gives the identical bound
for $E_N^-:=\{\min_{1\le m\le N}A_mh<-\epsilon\}$.  Letting
$N\to\infty$ and using continuity from below proves the claim by the
union bound.
\end{proof}

\begin{lemma}[Invariant real functions in an ergodic system]
\label{lem:invariant-function}
Let $(\Omega,\mathcal F,\mu,S)$ be probability preserving and ergodic.
Let $g:\Omega\to\mathbb R$ be measurable and finite $\mu$-almost
surely.  If $g\circ S=g$ $\mu$-almost surely, then there is a constant
$c\in\mathbb R$ such that $g=c$ $\mu$-almost surely.
\end{lemma}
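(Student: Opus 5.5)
The plan is to reduce to invariant sets: for each real $t$, the set $A_t:=\{g<t\}$ is measurable, and I will show it is invariant modulo null sets. Lemma~\ref{lem:invariant-function} assumes only the ergodicity defined in Section~\ref{sec:definitions}: a set $A$ with $\mu(A\mathbin\triangle S^{-1}A)=0$ has measure $0$ or $1$. Measurability of $A_t$ holds because $g$ is measurable. For invariance, observe that $S^{-1}A_t=\{g\circ S<t\}$. Since $g\circ S=g$ off a null set $Z$, the sets $A_t$ and $S^{-1}A_t$ coincide outside $Z$, so $\mu(A_t\mathbin\triangle S^{-1}A_t)\le\mu(Z)=0$. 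Ergodicity then gives $\mu(A_t)\in\{0,1\}$ for every $t\in\mathbb R$.

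Next I will pin down the constant. Let $F(t):=\mu(A_t)$. This function is nondecreasing in $t$ and takes only the values $0$ and $1$. Because $g$ is finite almost surely, $A_t\uparrow\{g<\infty\}$ as $t\to\infty$ along the integers, and continuity from below gives $F(t)\to1$. Similarly, $A_t\downarrow\{g=-\infty\}$ as $t\to-\infty$, and continuity from above gives $F(t)\to0$. So the set $\{t:F(t)=1\}$ is nonempty, bounded below, and upward closed. Define
\[
 c:=\inf\{t\in\mathbb R:F(t)=1\}\in\mathbb R .
\]

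To finish, I will compare $g$ with $c$ using rational levels. For rational $q>c$ we have $F(q)=1$, so $\mu\{g\ge q\}=0$; taking the countable union over rationals $q>c$ gives $\mu\{g>c\}=0$. For rational $q<c$ we have $F(q)=0$, so $\mu\{g<q\}=0$; the countable union gives $\mu\{g<c\}=0$. Hence $g=c$ almost surely.

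The main point needing care is the use of countably many levels, so that only countable unions of null sets occur. Every other step is an immediate check, so I do not expect a real obstacle.
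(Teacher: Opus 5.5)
Your proof is correct and follows essentially the same route as the paper. In both, the level sets $\{g<t\}$ are invariant modulo null sets, ergodicity forces their measure into $\{0,1\}$, $c$ is the infimum of the full-measure levels, and countably many rational levels finish the argument. The only cosmetic difference is that you define $c$ over real levels and pass to rationals at the end, while the paper restricts to rational levels throughout.
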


\begin{proof}
For each rational $r$, the identity $g\circ S=g$ almost surely
implies
\[
 \mu\bigl(\{g<r\}\mathbin\triangle S^{-1}\{g<r\}\bigr)=0.
\]
Ergodicity therefore gives $\mu\{g<r\}\in\{0,1\}$.  Because $g$ is finite almost surely,
\[
 \mu\{g<r\}\longrightarrow0\quad(r\to-\infty,\ r\in\mathbb Q),
 \qquad
 \mu\{g<r\}\longrightarrow1\quad(r\to+\infty,\ r\in\mathbb Q).
\]
Define
\[
 c:=\inf\{r\in\mathbb Q:\mu(g<r)=1\}.
\]
The preceding limits show that $c$ is finite.  If $r<c$ is rational,
then $\mu(g<r)=0$; if $r>c$ is rational, the definition of the infimum
yields a $q\in\mathbb Q$ with $q<r$ and $\mu(g<q)=1$, and monotonicity
then gives $\mu(g<r)=1$.  Taking rational sequences
$r\uparrow c$ and $r\downarrow c$ yields
$\mu(g<c)=0$ and $\mu(g\le c)=1$.  Thus $g=c$ almost surely.
\end{proof}

\begin{lemma}[Bounded approximation in \(L^1\)]
\label{lem:L1-bounded-approximation}
Let $(\Omega,\mathcal F,\mu)$ be a probability space and
$f\in L^1(\mu)$.  For every $\epsilon>0$ there is a bounded measurable
function $g$ such that $\|f-g\|_{L^1(\mu)}<\epsilon$.
\end{lemma}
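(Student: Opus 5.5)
The approach is truncation.  For $M>0$ put $g_M:=f\,\one_{\{|f|\le M\}}$.  This is measurable, since $f$ is measurable and $\{|f|\le M\}\in\mathcal F$, and it is bounded by $M$.  The error is
\[
 \|f-g_M\|_{L^1(\mu)}=\int_{\{|f|>M\}}|f|\,d\mu ,
\]
so it suffices to show that this tail integral tends to $0$ as $M\to\infty$ along $M\in\N$.

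To see this, apply the monotone convergence theorem, one of the listed foundational facts, to the nonnegative functions $|f|\one_{\{|f|\le M\}}$.  These increase pointwise to $|f|$ wherever $|f|<\infty$.  Because $f\in L^1(\mu)$, the function $f$ is finite $\mu$-almost surely, so this convergence holds almost everywhere.  Monotone convergence then gives
\[
 \int|f|\one_{\{|f|\le M\}}\,d\mu\uparrow\int|f|\,d\mu<\infty .
\]
Subtracting from the finite quantity $\int|f|\,d\mu$ shows that the tail integral tends to $0$.  Choosing $M$ large enough that the tail is below $\epsilon$ gives the required $g:=g_M$.

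A dominated or bounded convergence argument cannot be used directly here, because Theorem~\ref{thm:bounded-convergence} requires a uniform bound on the approximating functions and that bound is unavailable for the tails.  Monotone convergence, which needs no such bound, is the right tool.  Finiteness of $\int|f|$ is what makes the subtraction legitimate.  I expect no real obstacle in this argument.

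If $f$ is allowed to take extended real values on a null set, first redefine it to be $0$ there; this changes neither its $L^1$ class nor the norms involved.
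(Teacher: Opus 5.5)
Your proof is correct and follows essentially the same route as the paper. The paper clips $f$ to $\max\{-M,\min\{f,M\}\}$ instead of zeroing it on $\{|f|>M\}$, but in both cases the error is at most $\int_{\{|f|>M\}}|f|\,d\mu$, which tends to $0$ by the same monotone convergence argument applied to $|f|\one_{\{|f|\le M\}}\uparrow|f|$.
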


\begin{proof}
For $M\ge1$, define the truncation
\[
 f^{(M)}:=\max\{-M,\min\{f,M\}\}.
\]
It is bounded and measurable, and
\[
 |f-f^{(M)}|\le |f|\,\one_{\{|f|>M\}}.
\]
The functions $|f|\one_{\{|f|\le M\}}$ increase pointwise to
$|f|$.  By monotone convergence,
\[
 \int |f|\one_{\{|f|>M\}}\,d\mu\longrightarrow0.
\]
Choosing $M$ sufficiently large proves the claim.
\end{proof}

\begin{theorem}[Birkhoff's pointwise ergodic theorem, ergodic form]
\label{thm:birkhoff}
Let $(\Omega,\mathcal F,\mu,S)$ be a probability-preserving ergodic
system and let $f\in L^1(\mu)$.  Then
\[
  \frac1n\sum_{i=0}^{n-1}f\circ S^i
  \longrightarrow \int f\,d\mu
  \qquad\mu\text{-almost surely}.
\]
\end{theorem}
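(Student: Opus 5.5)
We prove Theorem~\ref{thm:birkhoff} by the standard maximal-inequality route, using Lemma~\ref{lem:maximal-average}, Lemma~\ref{lem:L1-bounded-approximation} and Lemma~\ref{lem:invariant-function}. Write $A_nf:=n^{-1}\sum_{i=0}^{n-1}f\circ S^i$. The limsup and liminf of $A_nf$ are $S$-invariant: from $A_{n+1}f=\frac{1}{n+1}f+\frac{n}{n+1}A_nf\circ S$ and finiteness of $f$ almost surely, the extended-real functions $\overline f:=\limsup_n A_nf$ and $\underline f:=\liminf_n A_nf$ satisfy $\overline f\circ S=\overline f$ and $\underline f\circ S=\underline f$ almost surely. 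The plan is therefore to show that the oscillation $\overline f-\underline f$ vanishes almost surely. Once this holds, the limit is finite and invariant, hence constant, and we then identify the constant.

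\textbf{Step 1 (convergence for a dense class).} We first prove almost sure convergence of $A_nu$ for functions of the form
\[
 u=c+(v-v\circ S),\qquad c\in\mathbb R,\ v\in L^\infty(\mu).
\]
For such $u$,
\[
 A_nu=c+\frac{v-v\circ S^n}{n}\to c
\]
everywhere, with error at most $2\|v\|_\infty/n$.

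The crux is density. Consider the $L^2$ closure of the span of the constants and the coboundaries $v-v\circ S$ with $v$ bounded. Its orthogonal complement consists of those $w\in L^2$ with $\int w=0$ and $\int w\,(v-v\circ S)=0$ for all bounded $v$. Writing $U$ for the Koopman isometry, this says $U^*w=w$. For an isometry, $\|Uw-w\|^2=2\|w\|^2-2\langle Uw,w\rangle=2\|w\|^2-2\langle w,U^*w\rangle=0$, so $w\circ S=w$. By Lemma~\ref{lem:invariant-function}, $w$ is constant, and since $\int w=0$ we get $w=0$. Hence this class is dense in $L^2$, and therefore in $L^1$.

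Lemma~\ref{lem:L1-bounded-approximation} lets us first approximate $f$ by a bounded function, which lies in $L^2$. So we get: for every $\delta>0$ there is $u$ in the class with $\|f-u\|_1<\delta$, and we may arrange $c=\int u$ with $|\int u-\int f|<\delta$.

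Orthogonality needs some care because we are working only with bounded $v$. Since bounded functions are dense in $L^2$ and $v\mapsto v-v\circ S$ is $L^2$-continuous, the closure is unaffected by restricting to bounded $v$.

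\textbf{Step 2 (maximal inequality transfer).} Put $h=f-u$. Then
\[
 \overline f-\underline f\le 2\sup_m|A_mh|,
\]
and the $\limsup$ of $|A_nf-\int f|$ is at most $\sup_m|A_mh|+\delta$. Lemma~\ref{lem:maximal-average} gives
\[
 \mu\Bigl\{\limsup_n|A_nf-\textstyle\int f|>\epsilon+\delta\Bigr\}\le\frac{2\delta}{\epsilon}.
\]
Letting $\delta\downarrow0$ with $\epsilon$ fixed, and then taking $\epsilon=1/k$ over a countable union, gives $A_nf\to\int f$ almost surely.

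This argument bypasses Step~0: the invariance of $\overline f$ and $\underline f$ is needed only inside the density proof, through Lemma~\ref{lem:invariant-function}. We expect the main obstacle to be the Hilbert-space density step, specifically checking the adjoint identity $\langle w,v\circ S\rangle=\langle U^*w,v\rangle$ and the isometry computation using only invariance of $\mu$. Both are routine.
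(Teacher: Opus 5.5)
Your proof is correct, but it takes a genuinely different route from the paper's.

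The paper works in two stages and uses no Hilbert-space input. For bounded $f$, it applies the finite-horizon maximal ergodic lemma (Lemma~\ref{lem:maximal-ergodic}) to $h=f-\overline f+\epsilon$ and $h'=\underline f-f+\epsilon$. The \emph{pointwise} invariance of $\overline f,\underline f$ makes the sets $E_N(h)$ increase to $\Omega$. This gives $\int\overline f\le\int f\le\int\underline f$, hence $\overline f=\underline f$ a.s.; that step does not even need ergodicity. Only afterwards does Lemma~\ref{lem:invariant-function} make the limit constant. The $L^1$ case then follows from bounded approximants with $\|f-f_j\|_1\le2^{-3j}$, Lemma~\ref{lem:maximal-average}, and Borel--Cantelli.

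You instead run the Banach-principle scheme. A von Neumann orthogonality argument shows that the functions $c+v-v\circ S$ with $v$ bounded are dense, and ergodicity enters only through the orthogonal vector $w$. Convergence on this class is trivial, with the correct limit $c=\int u$ built in. A single application of Lemma~\ref{lem:maximal-average} then transfers a.e.\ convergence to all of $L^1$, with no separate bounded case and no Borel--Cantelli.

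The price is Hilbert-space machinery: completeness of $L^2$, and the fact that a closed subspace with trivial orthogonal complement is the whole space. These lie outside the elementary toolkit the paper declares in Section~\ref{sec:standard-results}, and the paper's route buys exactly that self-containment.

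Two small remarks. First, you can dispense with the adjoint: extend $\langle w,v-v\circ S\rangle=0$ to $v=w$ by $L^2$-continuity, which gives $\langle w,w\circ S\rangle=\|w\|_2^2$ and hence $\|w-w\circ S\|_2=0$. Second, drop your opening paragraph on the invariance of $\overline f,\underline f$, along with the closing claim that this invariance is used ``inside the density proof''. Steps~1--2 never use $\overline f$ or $\underline f$; the invariance used there is that of $w$.
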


\begin{proof}
First suppose that $f$ is essentially bounded.  Choose a finite
$M$ and a bounded measurable function $f^{\flat}$ such that
$f^{\flat}=f$ almost surely and $|f^{\flat}|\le M$ everywhere.  If
$N:=\{f^{\flat}\ne f\}$, then measure preservation gives
\[
 \mu\!\left(\bigcup_{i\ge0}S^{-i}N\right)=0.
\]
Outside this null set, $A_nf^{\flat}=A_nf$ for every $n$.  It therefore
suffices to prove the bounded case for $f^{\flat}$.  Replacing $f$ by
$f^{\flat}$, assume from now on that $|f|\le M$ everywhere.  Write
\[
  \overline f:=\limsup_{n\to\infty}A_nf,
  \qquad
  \underline f:=\liminf_{n\to\infty}A_nf.
\]
Both functions are measurable and take values in $[-M,M]$.  Moreover,
for every $x\in\Omega$,
\[
 A_nf(Sx)-A_nf(x)
 =\frac{f(S^n x)-f(x)}{n}\longrightarrow0.
\]
Consequently,
\[
  \overline f\circ S=\overline f,
  \qquad
  \underline f\circ S=\underline f
\]
pointwise; in particular, no choice of exceptional sets is hidden in
the invariance argument.

Fix $\epsilon>0$ and put
$h:=f-\overline f+\epsilon$.  Since $\overline f$ is pointwise
invariant,
\[
  A_nh=A_nf-\overline f+\epsilon,
  \qquad
  \limsup_{n\to\infty}A_nh=\epsilon
\]
at every point.  Hence the events
\[
  E_N(h):=
  \left\{\max_{1\le m\le N}\sum_{i=0}^{m-1}h\circ S^i>0\right\}
\]
increase to all of $\Omega$.  Lemma~\ref{lem:maximal-ergodic} gives
$\int_{E_N(h)}h\,d\mu\ge0$ for every $N$.  Since $h$ is bounded,
Theorem~\ref{thm:bounded-convergence}, applied to
$h\one_{E_N(h)}\to h$, gives
\[
  \int h\,d\mu\ge0,
  \qquad\text{hence}\qquad
  \int\overline f\,d\mu\le\int f\,d\mu+\epsilon.
\]
Applying the same argument to
$h':=\underline f-f+\epsilon$ gives
\[
  \int\underline f\,d\mu\ge\int f\,d\mu-\epsilon.
\]
Letting $\epsilon\downarrow0$ yields
\[
  \int\overline f\,d\mu\le\int f\,d\mu
  \le\int\underline f\,d\mu.
\]
Because $\underline f\le\overline f$ pointwise, the nonnegative bounded
function $\overline f-\underline f$ has integral zero.  Therefore
$\underline f=\overline f$ almost surely, and $A_nf$ converges almost
surely to the finite function $g:=\overline f$.  The pointwise
invariance of $\overline f$ implies $g\circ S=g$ almost surely.
Lemma~\ref{lem:invariant-function} therefore gives $g=c$ almost surely
for some constant $c$.  The two integral inequalities above, or
bounded convergence applied to $A_nf\to c$, give
$c=\int f\,d\mu$.

Now let $f\in L^1(\mu)$.  By
Lemma~\ref{lem:L1-bounded-approximation}, choose bounded measurable
$f_j$ such that $\|f-f_j\|_1\le2^{-3j}$.  Lemma~\ref{lem:maximal-average} gives
\[
 \mu\left\{\sup_{n\ge1}|A_n(f-f_j)|>2^{-j}\right\}
 \le 2^{1-2j}.
\]
The series of these upper bounds converges, so
Theorem~\ref{thm:borel-cantelli}(a) implies that, almost surely, for all
sufficiently large $j$,
$\sup_n|A_n(f-f_j)|\le2^{-j}$.  For every $j$, the bounded case gives
$A_nf_j\to\int f_j\,d\mu$ almost surely.  Intersecting the resulting
countably many conull events, and using
$|\int(f-f_j)\,d\mu|\le2^{-3j}$, gives
\[
 \limsup_{n\to\infty}\left|A_nf-\int f\,d\mu\right|
 \le2^{-j}+2^{-3j}
\]
for all sufficiently large $j$.  Letting $j\to\infty$ completes the
proof.
\end{proof}

\begin{lemma}[Approximation by a generating algebra]
\label{lem:algebra-approximation}
Let $(\Omega,\mathcal F,\mu)$ be a probability space and let
$\mathcal A$ be an algebra of subsets of $\Omega$ such that
$\sigma(\mathcal A)=\mathcal F$.  Then, for every $A\in\mathcal F$ and
$\epsilon>0$, there is $A'\in\mathcal A$ such that
$\mu(A\mathbin\triangle A')<\epsilon$.

In particular, in a finite or countable product probability space,
every measurable event can be approximated in measure by an event in
the cylinder algebra generated by measurable rectangles depending on
only finitely many coordinates.
\end{lemma}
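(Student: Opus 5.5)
The approach is the standard good-sets argument: define the class of well-approximable sets, show it contains $\mathcal A$ and is a sigma-field, and conclude it equals $\mathcal F$. Concretely, let
\[
 \mathcal G:=\{A\in\mathcal F:\ \forall\epsilon>0\ \exists A'\in\mathcal A,\ \mu(A\mathbin\triangle A')<\epsilon\}.
\]
Trivially $\mathcal A\subset\mathcal G$, since one may take $A'=A$. If we show that $\mathcal G$ is a sigma-field, then $\mathcal F=\sigma(\mathcal A)\subset\mathcal G$, which is the claim.

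Closure under complements is immediate, because $A^c\mathbin\triangle (A')^c=A\mathbin\triangle A'$ and $\mathcal A$ is closed under complements. For countable unions, let $A_1,A_2,\ldots\in\mathcal G$ and $A=\bigcup_k A_k$, and fix $\epsilon>0$. By continuity from below, $\mu(\bigcup_{k\le K}A_k)\uparrow\mu(A)$. Choose $K$ so large that
\[
 \mu\Bigl(A\setminus\bigcup_{k\le K}A_k\Bigr)<\epsilon/2 .
\]
Then pick $A_k'\in\mathcal A$ with $\mu(A_k\mathbin\triangle A_k')<\epsilon/(2K)$ for $k\le K$, and set $A'=\bigcup_{k\le K}A_k'\in\mathcal A$. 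The inclusion
\[
 A\mathbin\triangle A'\subset\Bigl(A\setminus\bigcup_{k\le K}A_k\Bigr)\cup\bigcup_{k\le K}(A_k\mathbin\triangle A_k')
\]
together with subadditivity gives $\mu(A\mathbin\triangle A')<\epsilon$. Hence $\mathcal G$ is a sigma-field containing $\mathcal A$, so $\mathcal G=\mathcal F$.

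For the product-space clause, it suffices to check that the finite-dimensional cylinder sets form an algebra that generates the product sigma-field. They are closed under complements and finite unions, since any two such sets can be viewed as depending on a common finite set of coordinates. They generate the product sigma-field by its definition. Applying the first part then gives the claim.

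The only delicate point is the countable-union step, where the truncation at $K$ must be made before the approximants are chosen. This is handled by continuity from below, which is among the foundational facts listed in Section~\ref{sec:standard-results}. No monotone class theorem is needed, because $\mathcal G$ is shown to be a sigma-field directly.
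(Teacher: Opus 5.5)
Your proof is correct and is essentially the paper's own argument: the class of well-approximable sets contains $\mathcal A$ and is closed under complements, and the countable-union step first truncates via continuity from below and then uses $\epsilon/(2K)$-approximants. The product-space clause likewise reduces to the cylinder algebra generating the product sigma-field. One small point: the algebra in that clause is by definition the one generated by finite-coordinate rectangles, so it is an algebra automatically and you need not argue closure under complements separately.
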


\begin{proof}
Let $\mathcal D$ be the collection of all $A\in\mathcal F$ having the
stated approximation property for every $\epsilon>0$.  The class
$\mathcal D$ contains $\mathcal A$ and is closed under complements.  If
$A_i\in\mathcal D$ and $A=\bigcup_{i\ge1}A_i$, continuity from below
gives an $N$ such that
\[
  \mu\!\left(A\setminus\bigcup_{i=1}^N A_i\right)<\epsilon/2.
\]
Choose $A_i'\in\mathcal A$ with
$\mu(A_i\mathbin\triangle A_i')<\epsilon/(2N)$ for $1\le i\le N$.
Since $\mathcal A$ is an algebra, $A':=\bigcup_{i=1}^N A_i'$ belongs to
$\mathcal A$, and
\[
  \mu(A\mathbin\triangle A')
  \le \mu\!\left(A\setminus\bigcup_{i=1}^N A_i\right)
     +\sum_{i=1}^N\mu(A_i\mathbin\triangle A_i')<\epsilon.
\]
Thus $\mathcal D$ is a sigma-field containing $\mathcal A$, so
$\mathcal D=\mathcal F$.  The product-space assertion follows because
the indicated cylinder algebra generates the product sigma-field.
\end{proof}

\begin{theorem}[Extension for countably many finite coordinates]
\label{thm:kolmogorov-extension}
Let $I$ be finite or countable and, for each $i\in I$, let $E_i$ be a
nonempty finite set with its full sigma-field.  Suppose that for every
finite $J\subset I$ a probability measure $\mu_J$ on
$\prod_{i\in J}E_i$ is given and that these measures are projectively
consistent: whenever $J\subset J'$ are finite, the marginal of
$\mu_{J'}$ on $\prod_{i\in J}E_i$ equals $\mu_J$.  Then there is a
unique probability measure $\mu$ on $\prod_{i\in I}E_i$, with its
product sigma-field, whose marginal on every finite subproduct is
$\mu_J$.
\end{theorem}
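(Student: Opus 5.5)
The plan is to build $\mu$ on the cylinder algebra and then extend it by the Carath\'eodory extension theorem. The only nontrivial input is countable additivity on that algebra, and finiteness of the $E_i$ makes it available without any topology.

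First, enumerate $I$ as $i_1,i_2,\ldots$ (finite or countable). Put $J_k:=\{i_1,\ldots,i_k\}$ and $\Omega:=\prod_{i\in I}E_i$. Let $\mathcal A$ be the collection of cylinder sets $C=\pi_J^{-1}(B)$, where $J\subset I$ is finite and $B\subset\prod_{i\in J}E_i$. Since $E_i$ is finite, $B$ is automatically measurable.

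\begin{enumerate}[label=\textup{(\arabic*)}]
\item $\mathcal A$ is an algebra generating the product sigma-field. Every cylinder can be rewritten over a larger finite index set, so any two cylinders can be brought to a common $J$.
\item Define $\mu(\pi_J^{-1}B):=\mu_J(B)$. This is well defined: if $\pi_J^{-1}B=\pi_{J'}^{-1}B'$, pass to $J\cup J'$ and use projective consistency. The same device gives finite additivity, and $\mu(\Omega)=1$.
\end{enumerate}

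The main step is countable additivity on $\mathcal A$. By finite additivity it suffices to show the following: if $C_1\supset C_2\supset\cdots$ are cylinders with $\mu(C_n)\ge\delta>0$, then $\bigcap_nC_n\ne\emptyset$. I will prove this by a diagonal (K\"onig-type) argument that uses only finiteness of the $E_i$.

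Each $C_n$ depends on finitely many coordinates, so WLOG $C_n$ depends on $J_{k_n}$ with $k_n$ nondecreasing and unbounded. Then choose points coordinate by coordinate. Call a partial assignment $a\in\prod_{j\le k}E_{i_j}$ \emph{good} if $\mu(C_n\cap\{x:x_{J_k}=a\})>0$ for all $n$. Because $C_n$ decreases, the sets $\{n:\mu(C_n\cap\{x_{J_k}=a\})>0\}$ are initial segments of $\N$.

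\begin{itemize}
\item[] \emph{Base.} The empty assignment is good.
\item[] \emph{Extension.} If $a$ is good, some extension $a'\in\prod_{j\le k+1}E_{i_j}$ is good. Otherwise every one of the finitely many extensions fails from some $n$ onward. Taking the maximum of these finitely many thresholds yields a single $n$ with $\mu(C_n\cap\{x_{J_k}=a\})=0$, a contradiction.
\end{itemize}

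This gives a point $x$ all of whose restrictions are good. For each $n$, the set $C_n\cap\{x_{J_{k_n}}=x|_{J_{k_n}}\}$ has positive measure. Because $C_n$ is determined by the coordinates in $J_{k_n}$, it follows that $x\in C_n$.

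If $I$ is finite, this step is trivial, since $\mathcal A$ is then the finite power set.

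Finally, apply the construction clause of Carath\'eodory to extend $\mu$ to the product sigma-field. Its marginals are $\mu_J$ by definition. Uniqueness follows from the uniqueness clause: any two such measures agree on the generating algebra $\mathcal A$. I expect the only delicate point to be the countable-additivity step above. The rest is bookkeeping about common refinements of index sets.
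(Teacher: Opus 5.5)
Your proof is correct and follows the paper's route: the cylinder algebra, a well-defined finitely additive set function obtained by passing to a common index set, continuity at $\varnothing$ from a compactness property of cylinders that uses only finiteness of the $E_i$, and then the Carath\'eodory existence and uniqueness clauses. The only difference is in the compactness step: the paper proves the set-theoretic fact that decreasing nonempty cylinders have nonempty intersection, by diagonal subsequence extraction from points $x^{(n)}\in A_n$, whereas you run a K\"onig-type argument on finite prefixes that give positive mass to every $C_n$; the two are interchangeable, and your common-refinement step implicitly uses, as the paper states explicitly, that nonemptiness of the $E_i$ makes the projections $\pi_K$ surjective.
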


\begin{proof}
For each finite $J\subset I$, write
\[
 E_J:=\prod_{i\in J}E_i,
 \qquad
 \pi_J:E:=\prod_{i\in I}E_i\longrightarrow E_J,
 \qquad
 \pi_J(x):=(x_i)_{i\in J}.
\]
If $J\subset K\subset I$ are finite, also write
$\pi_{K,J}:E_K\to E_J$ for the corresponding coordinate projection.
A finite-coordinate cylinder is a set of the form
$\pi_J^{-1}(B)$ with $J\subset I$ finite and $B\subset E_J$.  Because
each $E_i$ has its full sigma-field, every such $B$ is measurable.
The collection $\mathcal A$ of all finite-coordinate cylinders is an
algebra: complements preserve the same coordinate set, and finite
unions can be represented after replacing the coordinate sets by their
finite union.  By definition, the product sigma-field on $E$ is
$\sigma(\mathcal A)$.

If $I$ is finite, take $\mu:=\mu_I$.  Its marginal on every
$E_J$, $J\subset I$, is $\mu_J$ by projective consistency.  Since in
this case $\mathcal A$ is the full sigma-field on the finite set $E$,
these marginals determine $\mu$ uniquely.  Hence it remains to treat
the case in which $I$ is countably infinite.

Fix an enumeration
\[
 I=\{i_1,i_2,\ldots\}.
\]
Recursively along this enumeration, choose one reference point
$e_{i_r}\in E_{i_r}$ for every $r\ge1$.  These reference coordinates
let us extend any point of a finite subproduct $E_J$ to a point of $E$.
Define a set function $\mu_0:\mathcal A\to[0,1]$ by
\[
 \mu_0\bigl(\pi_J^{-1}(B)\bigr):=\mu_J(B).
\]
We first verify that this definition is independent of the chosen
cylinder representation.  Suppose
\[
 \pi_J^{-1}(B)=\pi_{J'}^{-1}(B')
\]
for finite $J,J'\subset I$.  Put $K:=J\cup J'$.  Then
\[
 \pi_{K,J}^{-1}(B)=\pi_{K,J'}^{-1}(B').
\]
Indeed, if a point of $E_K$ belonged to exactly one of these two sets,
extending it outside $K$ by the reference coordinates $(e_i)$ would
produce a point of $E$ belonging to exactly one of the two displayed
cylinders, contrary to their equality.  Projective consistency now
gives
\[
 \mu_J(B)
 =\mu_K\bigl(\pi_{K,J}^{-1}(B)\bigr)
 =\mu_K\bigl(\pi_{K,J'}^{-1}(B')\bigr)
 =\mu_{J'}(B').
\]
Thus $\mu_0$ is well defined.

The set function $\mu_0$ is finitely additive.  To see this, let
$A_1,\ldots,A_m\in\mathcal A$ be pairwise disjoint.  Choose a finite
coordinate set $K\subset I$ on which all the $A_r$ depend, and write
$A_r=\pi_K^{-1}(B_r)$ with $B_r\subset E_K$.  The sets
$B_1,\ldots,B_m$ are pairwise disjoint: otherwise a point in an
intersection $B_r\cap B_s$ could be extended to a point of
$A_r\cap A_s$.  Therefore
\[
 \mu_0\!\left(\bigcup_{r=1}^m A_r\right)
 =\mu_K\!\left(\bigcup_{r=1}^m B_r\right)
 =\sum_{r=1}^m\mu_K(B_r)
 =\sum_{r=1}^m\mu_0(A_r).
\]
In particular, $\mu_0(\varnothing)=0$.  Since $I$ is countably
infinite in the case now under consideration,
$E=\pi_{\{i_1\}}^{-1}(E_{i_1})$, and hence
\[
 \mu_0(E)=\mu_{\{i_1\}}(E_{i_1})=1.
\]

We next prove continuity of $\mu_0$ at the empty set.  In fact, we prove
the stronger cylinder-intersection statement
\[
 \text{if }A_1\supset A_2\supset\cdots,\quad
 A_n\in\mathcal A,\quad\text{and every }A_n\ne\varnothing,
 \quad\text{then }\bigcap_{n\ge1}A_n\ne\varnothing.
 \tag{*}
\]
Choose $x^{(n)}\in A_n$ for every $n$.  Since $E_{i_1}$ is finite,
there is an infinite set $N_1\subset\mathbb N$ on which the coordinate
$x^{(n)}_{i_1}$ is constant.  Inductively, after choosing an infinite
$N_{r-1}$, finiteness of $E_{i_r}$ gives an infinite
$N_r\subset N_{r-1}$ on which $x^{(n)}_{i_r}$ is constant.  Choose
strictly increasing integers $n_r$ with $n_r\in N_r$.  For every fixed
$q$, the coordinates $x^{(n_r)}_{i_q}$ are then constant for all
$r\ge q$.  Define $x\in E$ by letting $x_{i_q}$ be this eventual
constant value.

Fix $m\ge1$.  Because $A_m$ is a cylinder, there is a finite set
$J_m\subset I$ such that membership in $A_m$ depends only on the
coordinates in $J_m$.  Choose $r$ so large that
\[
 n_r\ge m
 \quad\text{and}\quad
 J_m\subset\{i_1,\ldots,i_r\}.
\]
The sequence $(A_n)$ is decreasing, so
$x^{(n_r)}\in A_{n_r}\subset A_m$.  Moreover,
$x^{(n_r)}$ and $x$ agree on every coordinate in $J_m$.  Hence
$x\in A_m$.  Since $m$ was arbitrary, $x\in\bigcap_{m\ge1}A_m$,
which proves~\textup{(*)}.

Consequently, if $A_n\downarrow\varnothing$ with $A_n\in\mathcal A$,
then some $A_N$ must be empty; otherwise~\textup{(*)} would give a
point in their intersection.  Thus $\mu_0(A_n)=0$ for all $n\ge N$,
and in particular
\[
 \mu_0(A_n)\longrightarrow0.
\]
This is continuity of $\mu_0$ at the empty set.

We now verify countable additivity in precisely the form required of a
premeasure.  Let $A,A_1,A_2,\ldots\in\mathcal A$, suppose the $A_n$
are pairwise disjoint, and suppose
\[
 A=\bigsqcup_{n\ge1}A_n.
\]
For $N\ge1$, set
\[
 R_N:=A\setminus\bigcup_{n=1}^N A_n.
\]
Because $\mathcal A$ is an algebra, $R_N\in\mathcal A$; moreover
$R_N\downarrow\varnothing$.  Finite additivity gives
\[
 \mu_0(A)=\sum_{n=1}^N\mu_0(A_n)+\mu_0(R_N).
\]
Letting $N\to\infty$ and using continuity at the empty set yields
\[
 \mu_0(A)=\sum_{n\ge1}\mu_0(A_n).
\]
Hence $\mu_0$ is a finite premeasure on $\mathcal A$.

By the Carath\'eodory extension theorem, $\mu_0$ extends to a measure
$\mu$ on $\sigma(\mathcal A)$, the product sigma-field of $E$.
Because $E\in\mathcal A$ and $\mu_0(E)=1$, the extension is a
probability measure.  For every finite $J\subset I$ and every
$B\subset E_J$,
\[
 \mu\bigl(\pi_J^{-1}(B)\bigr)
 =\mu_0\bigl(\pi_J^{-1}(B)\bigr)
 =\mu_J(B),
\]
so the marginal of $\mu$ on $E_J$ is $\mu_J$.

Finally, if $\widetilde\mu$ is another probability measure on the
product sigma-field with the same finite-dimensional marginals, then
$\widetilde\mu$ and $\mu$ agree on every member of $\mathcal A$.
Since $\mathcal A$ is an algebra generating the product sigma-field and
both measures are finite, the uniqueness clause of the
Carath\'eodory extension theorem implies
$\widetilde\mu=\mu$.  This proves both existence and uniqueness.
\end{proof}

\begin{lemma}[Stationary bilateral finite-state Markov law]
\label{lem:bilateral-markov}
Let $E$ be a finite set, let $M=(M(a,b))_{a,b\in E}$ be a stochastic
matrix, and let $\nu$ be a probability row vector satisfying
$\nu M=\nu$.  There exists a unique probability law
$\mathsf M_{\nu,M}$ on $E^{\Z}$, equipped with its product sigma-field,
which is invariant under the bilateral left shift
$\sigma((z_t)_{t\in\Z})=(z_{t+1})_{t\in\Z}$ and under which the coordinate process
$(Z_t)_{t\in\Z}$ is a stationary Markov chain with one-time marginal $\nu$ and
transition matrix $M$.  Its finite-dimensional distributions are
characterized by
\[
 \mathsf M_{\nu,M}\{Z_{t_0}=z_0,\ldots,Z_{t_m}=z_m\}
 =\nu(z_0)\prod_{i=1}^m M^{t_i-t_{i-1}}(z_{i-1},z_i)
\]
for integers $t_0<\cdots<t_m$ and states $z_0,\ldots,z_m\in E$.
If
\[
  M^n(a,b)\longrightarrow\nu(b)
  \qquad(n\to\infty)
\]
for all $a,b\in E$, then the bilateral shift on
$(E^{\Z},\mathsf M_{\nu,M})$ is mixing.
\end{lemma}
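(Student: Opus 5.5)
The plan has three parts: build $\mathsf M_{\nu,M}$ from its finite-dimensional laws via Theorem~\ref{thm:kolmogorov-extension}, get invariance and uniqueness from agreement on cylinders, and prove mixing by approximating arbitrary sets with cylinders (Lemma~\ref{lem:algebra-approximation}).

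\emph{Existence.} For each finite $J=\{t_0<\cdots<t_m\}\subset\Z$, define $\mu_J$ on $E^J$ by the displayed product formula. Each $\mu_J$ is a probability measure: summing over $z_m,z_{m-1},\dots$ in turn uses $\sum_b M^k(a,b)=1$.

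Projective consistency only needs checking when one index $s$ is removed from $J$.
\begin{itemize}
\item If $s$ lies strictly between $t_{i-1}$ and $t_{i+1}$, summing over $z_s$ uses the Chapman--Kolmogorov identity $\sum_{c}M^{a}(x,c)M^{b}(c,y)=M^{a+b}(x,y)$.
\item If $s=t_m$ is the largest index, summing over $z_s$ uses row-stochasticity.
\item If $s=t_0$ is the smallest index, summing over $z_s$ uses $\nu M^{k}=\nu$, which follows from $\nu M=\nu$ by induction.
\end{itemize}
Theorem~\ref{thm:kolmogorov-extension} (with $I=\Z$ countable and each $E_t=E$ finite) then gives a unique law with these marginals.

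\emph{Stationarity and the Markov property.} The formula depends on the times only through their differences, so the shifted law $\sigma_\#\mathsf M_{\nu,M}$ has the same finite-dimensional marginals. The uniqueness clause of Theorem~\ref{thm:kolmogorov-extension} gives $\sigma_\#\mathsf M_{\nu,M}=\mathsf M_{\nu,M}$. The Markov property with transition matrix $M$ and marginal $\nu$ is read off from the formula applied to consecutive times. Conversely, any shift-invariant law under which the coordinates form a stationary Markov chain with marginal $\nu$ and transition $M$ has exactly these finite-dimensional distributions, by the chain rule and Chapman--Kolmogorov. So uniqueness again follows from Theorem~\ref{thm:kolmogorov-extension}.

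\emph{Mixing.} Write $\mu:=\mathsf M_{\nu,M}$.

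First take cylinders $A,B$ both depending on coordinates in $[-L,L]$. For $m>2L$, the set $\sigma^{-m}B$ depends on coordinates in $[m-L,m+L]$, which lie entirely after those of $A$. Decompose $A$ and $B$ into atoms $\{Z_{-L}^{L}=u\}$ and $\{Z_{-L}^{L}=w\}$. The product formula gives
\[
\mu\bigl(\{Z_{-L}^{L}=u\}\cap\sigma^{-m}\{Z_{-L}^{L}=w\}\bigr)
=\mu\{Z_{-L}^L=u\}\,\frac{M^{m-2L}(u_L,w_{-L})}{\nu(w_{-L})}\,\mu\{Z_{-L}^L=w\}
\]
when $\nu(w_{-L})>0$. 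When $\nu(w_{-L})=0$, both sides vanish, because $\nu M^k=\nu$ forces $M^k(a,w_{-L})=0$ for every $a$ with $\nu(a)>0$. The hypothesis $M^n(a,b)\to\nu(b)$ makes the middle ratio tend to $1$. Summing over the finitely many atoms gives $\mu(A\cap\sigma^{-m}B)\to\mu(A)\mu(B)$.

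For general measurable $A,B$, use Lemma~\ref{lem:algebra-approximation} to pick cylinders $A',B'$ with $\mu(A\triangle A'),\mu(B\triangle B')<\epsilon$. Invariance gives $\mu(\sigma^{-m}B\triangle\sigma^{-m}B')<\epsilon$. Hence
\[
\bigl|\mu(A\cap\sigma^{-m}B)-\mu(A)\mu(B)\bigr|
\le\bigl|\mu(A'\cap\sigma^{-m}B')-\mu(A')\mu(B')\bigr|+4\epsilon,
\]
and letting $m\to\infty$ and then $\epsilon\downarrow0$ completes the mixing claim.

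The only delicate step is the bookkeeping with zero-probability states in the cylinder computation, together with the edge case in consistency where the first time point is deleted. Neither is a real obstacle.
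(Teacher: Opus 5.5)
Your proposal is correct and follows essentially the same route as the paper: Kolmogorov extension after checking projective consistency in the three deletion cases, shift invariance from the translation invariance of the product formula, and mixing proved first on cylinders and then extended by approximation via Lemma~\ref{lem:algebra-approximation}. The only difference is cosmetic: you compute the cylinder limit atom by atom on a symmetric window $[-L,L]$, whereas the paper factors general cylinder events through the boundary states $Z_u$ and $Z_w$ using the function $h_B$, and your atom formula is that factorization specialized to atoms.
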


\begin{proof}
We first verify projective consistency.  Summing over a terminal
state uses $\sum_bM^r(a,b)=1$.  Summing over an initial state uses
$\sum_a\nu(a)M^r(a,b)=(\nu M^r)(b)=\nu(b)$.  Summing over an
intermediate state uses the Chapman--Kolmogorov identity
\[
 \sum_cM^r(a,c)M^s(c,b)=M^{r+s}(a,b).
\]
Repeated marginalization therefore gives every required lower-dimensional
distribution.  Theorem~\ref{thm:kolmogorov-extension} gives a unique law
on $E^{\Z}$.  The displayed formula is unchanged by translating all
times by the same integer, so the law is shift invariant.  The same
formula, or equivalently division by any positive-probability finite
past cylinder, gives the Markov property with transition matrix $M$ and
one-time marginal $\nu$.

Let $A$ and $B$ be cylinder events.  Choose an integer $u$ not smaller
than every time coordinate on which $A$ depends, and an integer $v$ not
larger than every time coordinate on which $B$ depends; no ordering of
$u$ and $v$ is required.  Define, using only ratios of probabilities of
finite-cylinder events,
\[
  h_B(b):=
  \begin{cases}
    \displaystyle
    \frac{\mathsf M_{\nu,M}(B\cap\{Z_v=b\})}{\nu(b)},
       &\nu(b)>0,\\[2ex]
    0,&\nu(b)=0.
  \end{cases}
\]
Thus no regular conditional-probability existence theorem is being
invoked.  By finite additivity,
\[
  \sum_{b\in E}\nu(b)h_B(b)=\mathsf M_{\nu,M}(B).
\]

For completeness, we record the finite-cylinder factorization used
below.  If $C$ is a cylinder event depending only on coordinates at
times at most $u$, and $D$ is a cylinder event depending only on
coordinates at times at least $w\ge u$, then
\begin{equation}\label{eq:markov-cylinder-factorization}
 \mathsf M_{\nu,M}(C\cap D)
 =\sum_{a,b\in E}
   \mathsf M_{\nu,M}(C\cap\{Z_u=a\})
   M^{w-u}(a,b)
   \frac{\mathsf M_{\nu,M}(D\cap\{Z_w=b\})}
        {\mathsf M_{\nu,M}\{Z_w=b\}},
\end{equation}
where the quotient is defined to be $0$ when its denominator is $0$.
To verify \eqref{eq:markov-cylinder-factorization}, first take $C$ and
$D$ to be cylinder atoms.  The defining finite-dimensional formula
for $\mathsf M_{\nu,M}$ splits the product of transition factors at
times $u$ and $w$; summing over any unmentioned intermediate states
uses the Chapman--Kolmogorov identity.  Finite disjoint unions then
give the formula for arbitrary cylinder events $C,D$.

We also check that the zero-denominator convention causes no missing
terms.  If $\nu(b)=0$, then for every $r\ge1$ stationarity gives
\[
 0=\nu(b)=\sum_a\nu(a)M^r(a,b),
\]
so nonnegativity implies $M^r(a,b)=0$ whenever $\nu(a)>0$.  If $r=0$,
the only potentially nonzero term has $a=b$, but then
$\mathsf M_{\nu,M}(C\cap\{Z_u=b\})\le\nu(b)=0$.

Now let $n\ge\max\{0,u-v\}$ and put
\[
  w:=n+v,\qquad r:=w-u=n+v-u\in\N_0.
\]
The event $\sigma^{-n}B$ depends only on coordinates at times at least
$w$.  Stationarity gives
\[
 \frac{\mathsf M_{\nu,M}
   (\sigma^{-n}B\cap\{Z_w=b\})}
      {\mathsf M_{\nu,M}\{Z_w=b\}}
 =h_B(b)
\]
with the same zero-denominator convention.  Applying
\eqref{eq:markov-cylinder-factorization} with
$C=A$ and $D=\sigma^{-n}B$ therefore yields
\[
 \mathsf M_{\nu,M}(A\cap\sigma^{-n}B)
 =\sum_{a,b\in E}
   \mathsf M_{\nu,M}(A\cap\{Z_u=a\})
   M^{n+v-u}(a,b)h_B(b).
\]
All sums are finite, and the exponent is a nonnegative integer.
Taking $n\to\infty$ and using the assumed transition-probability limit
gives
\begin{align*}
 \mathsf M_{\nu,M}(A\cap\sigma^{-n}B)
 &\longrightarrow
 \sum_{a,b\in E}
   \mathsf M_{\nu,M}(A\cap\{Z_u=a\})\nu(b)h_B(b)\\
 &=\mathsf M_{\nu,M}(A)\mathsf M_{\nu,M}(B),
\end{align*}
because $\sum_b\nu(b)h_B(b)=\mathsf M_{\nu,M}(B)$.
To extend the limit, fix measurable $A,B$ and $\epsilon>0$ and choose
cylinder-algebra events $A',B'$ with
$\mathsf M_{\nu,M}(A\mathbin\triangle A')<\epsilon$ and
$\mathsf M_{\nu,M}(B\mathbin\triangle B')<\epsilon$.  Invariance gives,
uniformly in $n$,
\[
 \left|\mathsf M_{\nu,M}(A\cap\sigma^{-n}B)
       -\mathsf M_{\nu,M}(A'\cap\sigma^{-n}B')\right|\le2\epsilon,
\]
and
\[
 \left|\mathsf M_{\nu,M}(A)\mathsf M_{\nu,M}(B)
       -\mathsf M_{\nu,M}(A')\mathsf M_{\nu,M}(B')\right|
 \le2\epsilon.
\]
Apply the cylinder limit and then let $\epsilon\downarrow0$.
\end{proof}

\section{Preliminary mixing facts}\label{sec:mixing-facts}

We next prove the elementary mixing facts needed for the construction.

\begin{lemma}[Products and factors of mixing systems]\label{lem:mixing-products}
Let $(\Omega_r,\mathcal F_r,\mu_r,S_r)$, $r\in R$, be
probability-preserving mixing systems, where $R$ is finite or countable.
Assume that the product probability measure
$\mu:=\bigotimes_{r\in R}\mu_r$ on
$\bigotimes_{r\in R}\mathcal F_r$ is given (for finite $R$ this is the
ordinary finite product measure).
\begin{enumerate}[label=\textup{(\alph*)}]
\item The coordinatewise product transformation on
      $(\prod_{r\in R}\Omega_r,\bigotimes_{r\in R}\mathcal F_r,
      \bigotimes_{r\in R}\mu_r)$ is mixing.
\item Every measurable factor of a mixing system is mixing.
\item Every mixing system is ergodic.
\item Every measurable factor of an ergodic probability-preserving
      system is ergodic.
\end{enumerate}
\end{lemma}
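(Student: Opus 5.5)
For part (a), the plan is to verify the mixing limit first on measurable rectangles and then pass to arbitrary sets. Write $S$ for the product transformation $(S x)_r := S_r x_r$. Measurability of $S$ holds because preimages of measurable rectangles are measurable rectangles.

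\emph{Invariance of $\mu$.} The measures $\mu$ and $\mu\circ S^{-1}$ agree on every rectangle $C=\prod_{r\in F}C_r\times\prod_{r\notin F}\Omega_r$ with $F\subset R$ finite, since
\[
 \mu(S^{-1}C)=\prod_{r\in F}\mu_r(S_r^{-1}C_r)=\prod_{r\in F}\mu_r(C_r).
\]
Finite disjoint unions of such rectangles form an algebra generating the product sigma-field. The uniqueness clause of the extension theorem then gives $\mu\circ S^{-1}=\mu$.

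\emph{Mixing on rectangles.} For rectangles $A,B$ depending on a common finite index set $F$,
\[
 \mu(A\cap S^{-m}B)=\prod_{r\in F}\mu_r(A_r\cap S_r^{-m}B_r).
\]
Each of the finitely many factors tends to $\mu_r(A_r)\mu_r(B_r)$, so the product tends to $\mu(A)\mu(B)$. By bilinearity the limit extends to finite disjoint unions of rectangles, that is, to the whole algebra.

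\emph{Approximation.} For general measurable $A,B$, use Lemma~\ref{lem:algebra-approximation} to find algebra sets $A',B'$ within $\epsilon$ in measure. The $2\epsilon$ estimate from the end of the proof of Lemma~\ref{lem:bilateral-markov} applies verbatim, using only invariance of $\mu$. This approximation step is the only slightly delicate point. It requires invariance of $\mu$ to be established first, and uses that the rectangle algebra generates the product sigma-field, which is the definition of that sigma-field.

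For (b), let $\Phi:(\Omega,\mu,S)\to(\Omega',\nu,R)$ be a factor map and take $C,D\in\mathcal F'$. Since $\Phi\circ S=R\circ\Phi$ almost surely, induction gives $\Phi\circ S^m=R^m\circ\Phi$ almost surely, and hence $\Phi^{-1}R^{-m}D=S^{-m}\Phi^{-1}D$ modulo a $\mu$-null set. Therefore
\[
 \nu(C\cap R^{-m}D)=\mu\bigl(\Phi^{-1}C\cap S^{-m}\Phi^{-1}D\bigr)\to\mu(\Phi^{-1}C)\,\mu(\Phi^{-1}D)=\nu(C)\nu(D).
\]
Invariance of $\nu$ is part of the meaning of a factor of a probability-preserving system. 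If that is not taken as given, it follows the same way from $\nu(R^{-1}D)=\mu(\Phi^{-1}R^{-1}D)=\mu(S^{-1}\Phi^{-1}D)$.

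For (c), suppose $\mu(A\triangle S^{-1}A)=0$. By induction and invariance, $\mu(A\triangle S^{-m}A)=0$ for every $m$, since the symmetric difference is subadditive along the chain $A,S^{-1}A,\dots,S^{-m}A$. Consequently $\mu(A\cap S^{-m}A)=\mu(A)$ for all $m$. Mixing forces $\mu(A)=\mu(A)^2$, so $\mu(A)\in\{0,1\}$.

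For (d), let $D\in\mathcal F'$ satisfy $\nu(D\triangle R^{-1}D)=0$. Then
\[
 \mu\bigl(\Phi^{-1}D\triangle S^{-1}\Phi^{-1}D\bigr)=\mu\bigl(\Phi^{-1}(D\triangle R^{-1}D)\bigr)=0,
\]
using the almost-sure intertwining relation. Ergodicity of $\mu$ then gives $\nu(D)=\mu(\Phi^{-1}D)\in\{0,1\}$.
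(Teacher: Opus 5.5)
Your proposal is correct and follows essentially the same route as the paper: mixing is checked on rectangles, extended to the generated algebra, and then to all measurable sets via Lemma~\ref{lem:algebra-approximation} and invariance; (b) uses almost-sure intertwining; (c) uses $\mu(A)=\mu(A\cap S^{-m}A)\to\mu(A)^2$; and (d) pulls invariant sets back through $\Phi$. The differences are minor. You handle finite and countable $R$ in one pass rather than two cases, and you explicitly verify invariance of the product measure (which the paper uses implicitly in its $2\epsilon$ estimate) and the identity $\mu(A\triangle S^{-m}A)=0$ in (c).
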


\begin{proof}
For a finite product and measurable rectangles
$A=\prod_r A_r$, $B=\prod_r B_r$, mixing follows from
\[
  \mu(A\cap S^{-m}B)
  =\prod_r\mu_r(A_r\cap S_r^{-m}B_r)
  \longrightarrow
  \prod_r\mu_r(A_r)\mu_r(B_r)=\mu(A)\mu(B).
\]
By finite disjointification and finite additivity, the same limit holds
for $A,B$ in the algebra generated by measurable rectangles.
Lemma~\ref{lem:algebra-approximation} then extends the conclusion to all
measurable $A,B$.

For a countable product, first take $A,B$ in the finite-coordinate
cylinder algebra.  Both depend on a common finite set of product
coordinates and hence may be viewed as events in a finite product, for
which the desired limit has just been proved.  For arbitrary measurable
$A,B$ and $\epsilon>0$, Lemma~\ref{lem:algebra-approximation} gives
cylinder-algebra events $A',B'$ with
$\mu(A\mathbin\triangle A')<\epsilon$ and
$\mu(B\mathbin\triangle B')<\epsilon$.  Invariance gives, uniformly in
$m$,
\[
 \bigl|\mu(A\cap S^{-m}B)-\mu(A'\cap S^{-m}B')\bigr|\le 2\epsilon
\]
and
\[
 \bigl|\mu(A)\mu(B)-\mu(A')\mu(B')\bigr|\le 2\epsilon.
\]
Apply mixing to $A',B'$ and then let $\epsilon\downarrow0$.

For the mixing-factor assertion, let $\Phi$ be a measurable map
satisfying $\Phi\circ S=R\circ\Phi$ almost surely and let
$\nu=\Phi_\#\mu$.  After removing a countable union of null sets, the
intertwining identity holds simultaneously for all nonnegative
iterates.  Hence, for measurable factor events $C,D$,
\[
 \nu(C\cap R^{-m}D)
 =\mu(\Phi^{-1}C\cap S^{-m}\Phi^{-1}D)
 \longrightarrow\nu(C)\nu(D).
\]

If $A$ is $S$-invariant modulo null sets in a mixing system, then
$\mu(A)=\mu(A\cap S^{-m}A)\to\mu(A)^2$, so
$\mu(A)\in\{0,1\}$.  Thus mixing implies ergodicity.  Finally, if the
source system is ergodic and $C$ is invariant modulo null sets under a
factor transformation $R$, then $\Phi^{-1}C$ is invariant modulo null
sets under $S$.  Therefore
$\nu(C)=\mu(\Phi^{-1}C)\in\{0,1\}$, proving factor ergodicity.
\end{proof}

\begin{lemma}[Existence and mixing of the geometric marker shift]
\label{lem:bernoulli-mixing}
There is a unique i.i.d.\ law $\pi^{\Z}$ on $\N^{\Z}$ with
$\pi_k:=\pi\{k\}=2^{-k}$ for $k\ge1$, and the bilateral left shift on
$(\N^{\Z},\pi^{\Z})$ is mixing.
\end{lemma}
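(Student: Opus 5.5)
Existence and uniqueness will come from Theorem~\ref{thm:kolmogorov-extension}, but that theorem requires \emph{finite} coordinate sets $E_i$, whereas here each coordinate takes values in the countable set $\N$. I will therefore not apply it directly. Instead I will encode the marker process by a finite-alphabet one. For each site $t\in\Z$, write the value $k\in\N$ through its binary ``stopping'' representation: $k$ corresponds to the word $0^{k-1}1$. Equivalently, let $(\xi_{t,j})_{t\in\Z,\,j\in\N}$ be i.i.d.\ fair bits and set $K_t:=\min\{j:\xi_{t,j}=1\}$.

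The index set $\Z\times\N$ is countable and each $E_i=\{0,1\}$ is finite. The uniform product marginals are trivially projectively consistent, so Theorem~\ref{thm:kolmogorov-extension} gives a product law $\beta$ on $\{0,1\}^{\Z\times\N}$. The set $\{\exists t:\ \xi_{t,j}=0\ \forall j\}$ is $\beta$-null, by continuity from above and a countable union. Off this set, the map $\Psi(\xi):=(K_t)_{t\in\Z}$ is measurable into $\N^{\Z}$ with its product sigma-field, and I define $\pi^{\Z}:=\Psi_\#\beta$. A finite-dimensional computation then gives
\[
 \pi^{\Z}\{Z_{t_1}=k_1,\ldots,Z_{t_m}=k_m\}=\prod_i 2^{-k_i},
\]
because the blocks $\{\xi_{t_i,j}\}_j$ for distinct $t_i$ involve disjoint coordinates. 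This is exactly the i.i.d.\ property with marginal $\pi$.

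For uniqueness, any two i.i.d.\ laws with marginal $\pi$ agree on the algebra of finite-coordinate cylinders $\{Z_{t_1}=k_1,\ldots,Z_{t_m}=k_m\}$ and their finite disjoint unions. This algebra generates the product sigma-field on $\N^{\Z}$, since $\N$ carries its full sigma-field generated by singletons. The uniqueness clause of the extension theorem then applies.

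For mixing, I will mirror the argument of Lemma~\ref{lem:bilateral-markov} and Lemma~\ref{lem:mixing-products}. Shift invariance is immediate from the product formula. Take cylinder-algebra events $A,B$ that depend on coordinates in finite sets $J_A,J_B$. For $m$ large, $J_A$ and $J_B+m$ are disjoint, and the product formula gives the exact identity $\pi^{\Z}(A\cap\sigma^{-m}B)=\pi^{\Z}(A)\pi^{\Z}(B)$, first for atoms and then by finite additivity. For general measurable $A,B$, Lemma~\ref{lem:algebra-approximation} supplies approximants $A',B'$ within $\epsilon$. Invariance then bounds the errors by $2\epsilon$ uniformly in $m$, exactly as in the earlier proofs, and letting $\epsilon\downarrow0$ finishes the argument.

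Alternatively, mixing follows from Lemma~\ref{lem:mixing-products}(b). The system $(\N^{\Z},\pi^{\Z},\sigma)$ is a factor via $\Psi$ of the shift $(\xi_{t,\cdot})\mapsto(\xi_{t+1,\cdot})$ on $\beta$. That shift is a countable product of mixing Bernoulli shifts indexed by $j$, so it is mixing by part~(a), provided each bilateral fair-coin shift is mixing, which follows from Lemma~\ref{lem:bilateral-markov} with $M$ having identical rows. I would still present the direct cylinder argument, because it is shorter.

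The main obstacle is the technical one of the countable alphabet. The extension theorem stated earlier is restricted to finite coordinate spaces, and the paper explicitly forbids appealing to unlisted regularity results. The care therefore goes into the encoding map $\Psi$: checking that it is measurable, that the exceptional null set is handled so that $\Psi$ is defined almost everywhere (set $\Psi:=1$ there), and that the pushforward has the correct cylinder probabilities. Once this is done, the remaining steps are routine.
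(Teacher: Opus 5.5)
Your proposal is correct and follows essentially the same route as the paper. It uses the same encoding of $K_t$ as the index of the first $1$ in a row of fair bits indexed by $\Z\times\N$, the extension theorem applied to these finite coordinates, and the convention $K_t=1$ on the null all-zero-row event. Uniqueness goes through the Carath\'eodory uniqueness clause on cylinders. Mixing comes from exact independence of cylinders on eventually disjoint coordinate sets, followed by the approximation lemma.

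One small repair is needed. Because the alphabet $\N$ is infinite, finite disjoint unions of atom cylinders do not form an algebra: the complement of $\{Z_0=1\}$ needs infinitely many atoms. You should instead work with the algebra of events $\{(Z_t)_{t\in J}\in B\}$, with $J$ finite and $B\subseteq\N^J$ arbitrary. Pass from atoms to these events by countable rather than finite additivity, both in the uniqueness step and in the mixing identity.
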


\begin{proof}
Apply Theorem~\ref{thm:kolmogorov-extension} to the finite coordinate
spaces $\{0,1\}$ indexed by $\Z\times\N$, with all coordinates fair
and independent, and call the resulting law $\beta$.  For $b=(b_{t,r})$, define
\[
 K_t(b):=\min\{r\ge1:b_{t,r}=1\}
\]
on the event that this set is nonempty, and define $K_t=1$ otherwise.
This is measurable because, for $k\ge2$,
\[
 \{K_t=k\}=\{b_{t,1}=\cdots=b_{t,k-1}=0,\ b_{t,k}=1\},
\]
while $\{K_t=1\}$ is the union of $\{b_{t,1}=1\}$ and the measurable
all-zero-row event.
For fixed $t$, the exceptional all-zero-row event has probability
$\lim_{m\to\infty}2^{-m}=0$; the union over $t\in\Z$ is still null.
Thus the convention $K_t=1$ on that event changes no one-row
probability, and
\[
 \beta\{K_t=k\}=2^{-k}\qquad(k\ge1).
\]
Moreover, each $K_t$ is measurable with respect to row $t$, and distinct
rows are independent.  Therefore, for any distinct
$t_1,\ldots,t_m$ and $k_1,\ldots,k_m\ge1$, the equality
\[
 \beta\{K_{t_1}=k_1,\ldots,K_{t_m}=k_m\}
 =\prod_{j=1}^m 2^{-k_j}
\]
holds exactly, including when some $k_j=1$; the added all-zero-row
pieces are null.
The pushforward law is therefore the desired i.i.d.\ law $\pi^{\Z}$.
Any other probability law with the same i.i.d.\ finite-dimensional
distributions agrees with it on the finite-coordinate cylinder algebra;
the uniqueness clause of the Carath\'eodory extension theorem therefore
gives equality on the product sigma-field.

If cylinder events $A$ and $B$ depend on finite sets of time
coordinates, then $A$ and $\sigma^{-m}B$ depend on disjoint coordinate
sets for all sufficiently large $m$ and hence are independent.
Lemma~\ref{lem:algebra-approximation} extends the mixing limit from
cylinders to all measurable events.
\end{proof}

\section{The synchronizing coded processes}\label{sec:code}

\subsection{Environments and markers}

Put
\[
  \Omega_Y:=\bigl(\{0,1\}^{\N}\bigr)^{\Z}
\]
with its product Borel sigma-field.  An element $y\in\Omega_Y$ is
written
\[
  y=(y_t)_{t\in\Z},
  \qquad y_t=(y_t^1,y_t^2,\ldots).
\]
Let $\sigma$ denote the bilateral left shift:
$(\sigma y)_t=y_{t+1}$.  A probability measure $\eta$ on $\Omega_Y$
is called an \emph{environment law} if it is $\sigma$-invariant and
mixing, that is,
\[
  \eta(A\cap\sigma^{-m}B)\longrightarrow\eta(A)\eta(B)
  \qquad(m\to\infty)
\]
for all measurable $A,B\subset\Omega_Y$.  By Lemma~\ref{lem:mixing-products}(c), every environment law is
also ergodic.

Let
\[
  \Omega_K:=\N^{\Z},
  \qquad \pi_k:=2^{-k}\quad(k\in\N),
\]
and let $\pi^{\Z}$ be the i.i.d.\ law supplied by
Lemma~\ref{lem:bernoulli-mixing}.  For $J\subset\Z$, write
$\pi^J$ for its marginal on $\N^J$.  Thus $K=(K_t)_{t\in\Z}$ is
i.i.d.\ and $\Prob(K_t=k)=\pi_k$.  The environment and the marker process will
always be independent.

For an environment law $\eta$, define its coordinate means by
\[
  q_k(\eta):=\eta\{y:y_0^k=1\},\qquad k\in\N.
\]

\subsection{A synchronizing binary code}

For $k\in\N$ and $b\in\{0,1\}$, set
\[
  d_0:=01,\qquad d_1:=11,
\]
and define
\[
  w(k,b):=00\,1^k\,01\,d_b.
\]
Its length is
\[
  \ell(k):=k+6.
\]
The word $00$ occurs in $w(k,b)$ only in its first two positions, and
every codeword ends in $1$.  Therefore, in any finite or infinite
concatenation of codewords, the occurrences of $00$ are exactly the
codeword starts.  At such a start, the number $k\ge1$ of consecutive
ones following the initial $00$ is determined by the position of the
next zero; the following bits must be $01$ and then either $01$ or
$11$, which uniquely determines $b$.  Hence the code is synchronizing
and uniquely decodable from every occurrence of $00$.

The mean codeword length is finite and equals
\[
  L:=\sum_{k\ge1}\pi_k\ell(k)
    =\sum_{k\ge1}2^{-k}(k+6)=2+6=8.
\]

\subsection{The stationary suspension and output law}

Fix an environment law $\eta$ and put
\[
  \mu_\eta:=\eta\otimes\pi^{\Z}
\]
on $\Omega_Y\times\Omega_K$.  Define the roof
\[
  r(y,k):=\ell(k_0)
\]
and the suspension space
\[
  \widehat\Omega
  :=\{(y,k,j):y\in\Omega_Y,\ k\in\Omega_K,
                  \ 0\le j<\ell(k_0)\}.
\]
Equip $\widehat\Omega$ with the trace sigma-field inherited from
$\Omega_Y\times\Omega_K\times\N_0$ and with the probability measure
\begin{equation}\label{eq:tower-measure}
  \widehat\mu_\eta(A)
  :=\frac1L
  \int_{\Omega_Y\times\Omega_K}
  \sum_{j=0}^{\ell(k_0)-1}\one_A(y,k,j)
  \,d\eta(y)\,d\pi^{\Z}(k).
\end{equation}
The normalization is correct because
$\int\ell(k_0)\,d\pi^{\Z}=L$.

Define the tower map
\[
  S(y,k,j)
  :=
  \begin{cases}
    (y,k,j+1),&j+1<\ell(k_0),\\
    (\sigma y,\sigma k,0),&j+1=\ell(k_0).
  \end{cases}
\]
It is a measurable bijection, with inverse
\[
  S^{-1}(y,k,j)
  :=
  \begin{cases}
    (y,k,j-1),&j\ge1,\\
    (\sigma^{-1}y,\sigma^{-1}k,\ell(k_{-1})-1),&j=0.
  \end{cases}
\]
If
$w(k_0,y_0^{k_0})=(c_1,\ldots,c_{\ell(k_0)})$, define the symbol map
\[
  F(y,k,j):=c_{j+1}.
\]
The map $F$ is measurable because its level sets are countable unions
of events specified by the measurable coordinates $k_0$, $j$, and
$y_0^k$.  Each map $F\circ S^{i-1}$ is therefore measurable, and the
product sigma-field on $\overline{\cX}$ makes the following bilateral
coding map measurable:
\[
  \Phi:\widehat\Omega\longrightarrow\overline{\cX},
  \qquad
  \Phi(y,k,j)_i:=F(S^{i-1}(y,k,j)),\quad i\in\Z.
\]
Thus the bilateral output process is $X_i:=\Phi_i=F\circ S^{i-1}$.
By construction,
\begin{equation}\label{eq:factor-intertwining}
  \Phi\circ S=\overline T\circ\Phi
\end{equation}
pointwise.  Let
\[
  \overline P_\eta:=\Phi_\#\widehat\mu_\eta.
\]
Let $\operatorname{res}_+:\overline{\cX}\to\cX$ be the restriction
map
\[
 \operatorname{res}_+((x_i)_{i\in\Z})=(x_1,x_2,\ldots).
\]
It is measurable and satisfies
$\operatorname{res}_+\circ\overline T
 =T\circ\operatorname{res}_+$.  Put
\[
  \Phi^+:=\operatorname{res}_+\circ\Phi,
  \qquad
  P_\eta:=(\Phi^+)_\#\widehat\mu_\eta
         =(\operatorname{res}_+)_\#\overline P_\eta.
\]
Then $\Phi^+\circ S=T\circ\Phi^+$ pointwise.

The lifted law factors in a useful way.  Define
\[
  \widehat\Omega_K
  :=\{(k,j):k\in\Omega_K,\ 0\le j<\ell(k_0)\},
\]
equip it with the trace sigma-field inherited from
$\Omega_K\times\N_0$, and define the probability measure
\begin{equation}\label{eq:lambda}
  \lambda(C)
  :=\frac1L\int_{\Omega_K}
     \sum_{j=0}^{\ell(k_0)-1}\one_C(k,j)\,d\pi^{\Z}(k).
\end{equation}
Under the natural bijection
\[
  (y,k,j)\longleftrightarrow (y,(k,j)),
\]
the trace sigma-field on $\widehat\Omega$ is exactly the product of the
sigma-field on $\Omega_Y$ and the trace sigma-field on
$\widehat\Omega_K$.  Indeed, the trace of the ambient product
sigma-field is generated by sets of the form
\[
  B\times\bigl(C\cap\widehat\Omega_K\bigr),
\]
where $B$ is measurable in $\Omega_Y$ and
$C$ is measurable in $\Omega_K\times\N_0$; these are precisely the
rectangular generators of the stated product sigma-field.  Hence, under
this measurable identification,
\begin{equation}\label{eq:factor-lift}
  \widehat\mu_\eta=\eta\otimes\lambda.
\end{equation}
The two sides agree on every measurable rectangle
$B\times C\subset\Omega_Y\times\widehat\Omega_K$ by
\eqref{eq:tower-measure} and \eqref{eq:lambda}; uniqueness of the finite
product measure gives equality on the whole product sigma-field.  Thus
the environment $y$ is independent of the marked suspension
state $(k,j)$, and the law of $(k,j)$ is common to all environment
laws.

\begin{lemma}[Stationarity and ergodicity]\label{lem:ergodic}
For every environment law $\eta$, the measure $\widehat\mu_\eta$ is
$S$-invariant and ergodic.  Consequently, $\overline P_\eta$ is a
stationary ergodic bilateral binary process law and
$P_\eta\in\cE$.
\end{lemma}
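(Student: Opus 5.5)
We split the argument into invariance, ergodicity of the suspension, and transfer of both properties to $\overline P_\eta$ and $P_\eta$.

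\emph{Invariance.} It suffices to check $\widehat\mu_\eta(S^{-1}A)=\widehat\mu_\eta(A)$ for measurable $A\subset\widehat\Omega$. Since $S$ is a measurable bijection, we check instead $\widehat\mu_\eta(SA)=\widehat\mu_\eta(A)$. Split $A$ into its level pieces $A_j:=A\cap\{j\text{-coordinate}=j\}$.

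For a level $j$ with $j+1<\ell(k_0)$, $S$ moves the fibre to level $j+1$ over the same base point. For the top level, $S$ moves the fibre to level $0$ over the base point $(\sigma y,\sigma k)$. Writing \eqref{eq:tower-measure} as $L^{-1}\sum_{j\ge0}\mu_\eta\{(y,k):\ell(k_0)>j,\ (y,k,j)\in A\}$, the internal levels contribute identical terms. The top-level term becomes the level-$0$ term after applying $\sigma\times\sigma$, and this preserves $\mu_\eta=\eta\otimes\pi^{\Z}$ because each factor is shift invariant. Summing these countably many terms, with monotone convergence and Tonelli justifying the interchange, gives invariance.

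\emph{Ergodicity.} Let $A\subset\widehat\Omega$ with $\widehat\mu_\eta(A\triangle S^{-1}A)=0$. Since $S$ preserves the measure, $A$ is also invariant modulo null sets under $S^{-1}$ and under all powers. Let $B\subset\Omega_Y\times\Omega_K$ be the base slice $\{(y,k):(y,k,0)\in A\}$.

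Because $S^j$ carries the level-$0$ point over $(y,k)$ to the level-$j$ point, the level-$j$ slice of $A$ agrees with $B\cap\{\ell(k_0)>j\}$ up to a $\mu_\eta$-null set. This uses that the level-$j$ part of $A\triangle S^{-j}A$ is null, and the level-$j$ part of $\widehat\mu_\eta$ is $L^{-1}\mu_\eta$ restricted to $\{\ell(k_0)>j\}$, which is absolutely continuous with respect to $\mu_\eta$. Similarly, the return map to level $0$ is $\sigma\times\sigma$, so $B$ is $(\sigma\times\sigma)$-invariant modulo $\mu_\eta$-null sets.

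Now $\eta$ is mixing and $\pi^{\Z}$ is mixing by Lemma~\ref{lem:bernoulli-mixing}. Hence $\mu_\eta$ is mixing by Lemma~\ref{lem:mixing-products}(a), and therefore ergodic by Lemma~\ref{lem:mixing-products}(c). So $\mu_\eta(B)\in\{0,1\}$. Consequently $\widehat\mu_\eta(A)=L^{-1}\int\one_B\,\ell(k_0)\,d\mu_\eta$ equals $0$ or $1$.

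\emph{Transfer to the output laws.} By \eqref{eq:factor-intertwining}, $\Phi$ is a factor map onto $(\overline{\cX},\overline P_\eta,\overline T)$. Lemma~\ref{lem:mixing-products}(d) then makes $\overline P_\eta$ stationary and ergodic. The same reasoning applied to $\Phi^+$, which intertwines $S$ with $T$ pointwise, gives $P_\eta\in\cE$. Here $T$-invariance of $P_\eta$ follows from $\Phi^+\circ S=T\circ\Phi^+$ together with invariance of $\widehat\mu_\eta$.

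\emph{Main obstacle.} The most delicate step is the bookkeeping showing that the level slices of an almost-invariant set coincide almost surely with a single base set. Each such statement must be made precise as an equality between $\mu_\eta$-null sets, using the explicit level decomposition of $\widehat\mu_\eta$, rather than by appealing to a general Kakutani-tower theorem. I would handle this directly with the sets $\{(y,k):(y,k,j)\in A\triangle S^{-1}A\}$, each of which is $\mu_\eta$-null on $\{\ell(k_0)>j\}$.
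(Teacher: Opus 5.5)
Your proposal is correct and follows essentially the paper's proof: level-by-level invariance with the top level wrapped to level $0$ via $\sigma\times\sigma$, the level-zero slice as a $(\sigma\times\sigma)$-almost-invariant base set whose null/conull dichotomy comes from mixing (hence ergodicity) of $\eta\otimes\pi^{\Z}$, and transfer through the factor maps $\Phi$ and $\Phi^+$. The only cosmetic difference is that the paper saturates the exceptional set into an $S$-invariant null set $N_*$ to get exact orbitwise constancy, and there is one small index slip in your version: identifying the level-$j$ slice with $B\cap\{\ell(k_0)>j\}$ uses the level-$j$ part of $A\mathbin\triangle S^{j}A$ (equivalently the level-$0$ part of $A\mathbin\triangle S^{-j}A$), not the level-$j$ part of $A\mathbin\triangle S^{-j}A$, which is harmless since you already noted invariance modulo null sets under all powers of $S^{\pm1}$.
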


\begin{proof}
Let $f$ be bounded and measurable on $\widehat\Omega$.  By
\eqref{eq:tower-measure}, the definition of $S$, and invariance of
$\eta\otimes\pi^{\Z}$ under $\sigma\times\sigma$,
\begin{align*}
 L\int f\circ S\,d\widehat\mu_\eta
 &=\int\left[
   \sum_{j=0}^{\ell(k_0)-2}f(y,k,j+1)
   +f(\sigma y,\sigma k,0)
 \right]d\eta(y)d\pi^{\Z}(k)\\
 &=\int\left[
   \sum_{j=1}^{\ell(k_0)-1}f(y,k,j)+f(y,k,0)
 \right]d\eta(y)d\pi^{\Z}(k)\\
 &=L\int f\,d\widehat\mu_\eta.
\end{align*}
Hence $\widehat\mu_\eta$ is $S$-invariant.

By Lemmas~\ref{lem:mixing-products} and~\ref{lem:bernoulli-mixing}, the base map
$\sigma\times\sigma$ on
$(\Omega_Y\times\Omega_K,\eta\otimes\pi^{\Z})$ is mixing and therefore
ergodic.

Let $A\subset\widehat\Omega$ satisfy
$\one_A\circ S=\one_A$ almost surely, and let
\[
 N:=\{z\in\widehat\Omega:\one_A(Sz)\ne\one_A(z)\}.
\]
Then $\widehat\mu_\eta(N)=0$.  Since $S$ is invertible and
measure preserving,
\[
 N_*:=\bigcup_{m\in\Z}S^mN
\]
is an $S$-invariant null set.  On the $S$-invariant conull set
$G:=\widehat\Omega\setminus N_*$, the equality
$\one_A(Sz)=\one_A(z)$ holds at every point of every full
$S$-orbit.  Hence membership in $A$ is exactly constant along each
$S$-orbit contained in $G$.

For a measurable set $C\subset\widehat\Omega$, define its level-zero
section by
\[
 C^{(0)}:=\{(y,k):(y,k,0)\in C\}.
\]
If $\widehat\mu_\eta(C)=0$, then
\[
 0=L\widehat\mu_\eta(C)
 =\int\sum_{j=0}^{\ell(k_0)-1}\one_C(y,k,j)
       \,d\eta(y)d\pi^{\Z}(k),
\]
and nonnegativity implies
$(\eta\otimes\pi^{\Z})(C^{(0)})=0$.  In particular,
$G^{(0)}$ is conull in the base.  Define
\[
  A_0:=A^{(0)}
  =\{(y,k):(y,k,0)\in A\}.
\]
The first return map from level zero to level zero is
$\sigma\times\sigma$, because
\[
 S^{\ell(k_0)}(y,k,0)=(\sigma y,\sigma k,0).
\]
For every $(y,k)$ such that $(y,k,0)\in G$, orbitwise constancy gives
\[
 \one_{A_0}(\sigma y,\sigma k)
 =\one_A(\sigma y,\sigma k,0)
 =\one_A(y,k,0)
 =\one_{A_0}(y,k).
\]
Thus $A_0$ is invariant modulo an
$\eta\otimes\pi^{\Z}$-null set.  Base ergodicity implies
$(\eta\otimes\pi^{\Z})(A_0)\in\{0,1\}$.

For every allowed level $0\le j<\ell(k_0)$,
\[
 S^j(y,k,0)=(y,k,j).
\]
Therefore, whenever $(y,k,0)\in G$, orbitwise constancy gives
\[
 \one_A(y,k,j)=\one_A(y,k,0)=\one_{A_0}(y,k)
 \qquad(0\le j<\ell(k_0)).
\]
The base set on which $(y,k,0)\notin G$ is null, and integrating its
finite roof height by~\eqref{eq:tower-measure} shows that the preceding
identity holds for $\widehat\mu_\eta$-almost every $(y,k,j)$.  Therefore
\[
  \widehat\mu_\eta(A)
  =\frac1L\int\ell(k_0)\one_{A_0}(y,k)
       \,d\eta(y)d\pi^{\Z}(k),
\]
which is $0$ if $A_0$ is null and $1$ if $A_0$ is conull.  Hence the
tower is ergodic.

Equation~\eqref{eq:factor-intertwining} shows that
$(\overline{\cX},\overline P_\eta,\overline T)$ is a measurable factor
of the tower.  It is probability preserving by the pushforward and
intertwining identities, and it is ergodic by
Lemma~\ref{lem:mixing-products}(d).  Since
$\operatorname{res}_+\circ\overline T
 =T\circ\operatorname{res}_+$, the one-sided system
$(\cX,P_\eta,T)$ is a further measurable factor; it is likewise
probability preserving and ergodic.  Thus $P_\eta\in\cE$.
\end{proof}

\subsection{Coordinate means are intrinsic to the output law}

For a finite binary word $u$, let $[u]\subset\cX$ denote the cylinder
of one-sided sequences beginning with $u$.  Since an output prefix
$00$ can occur only at a codeword start, the event that the output
begins with the complete word $w(k,b)$ is exactly
\[
  \{j=0,\ K_0=k,\ y_0^k=b\}.
\]
Indeed, after the initial $00$, the first following zero fixes $k$,
and the final two-bit block fixes $b$.  Hence
\begin{equation}\label{eq:cylinder}
  P_\eta([w(k,b)])
  =\frac{\pi_k}{L}\,\eta\{y:y_0^k=b\}.
\end{equation}
The cylinders $[w(k,b)]$, $k\in\N$, $b\in\{0,1\}$, are pairwise
disjoint.  Their union is not all of $[00]$ as a subset of the full
sequence space $\cX$, but it exhausts $[00]$ under every coded law:
indeed,
\[
  (\Phi^+)^{-1}([00])=\{(y,k,j):j=0\}
  =\bigsqcup_{k\ge1}\bigsqcup_{b\in\{0,1\}}
    (\Phi^+)^{-1}([w(k,b)]).
\]
Consequently,
\begin{equation}\label{eq:start-intensity}
  P_\eta([00])
  =\widehat\mu_\eta\{j=0\}
  =\frac1L
  =\sum_{k\ge1}\sum_{b\in\{0,1\}}P_\eta([w(k,b)]).
\end{equation}
Moreover,
\begin{equation}\label{eq:q-intrinsic}
  q_k(P_\eta)
  :=\frac{P_\eta([w(k,1)])}
       {P_\eta([w(k,0)])+P_\eta([w(k,1)])}
  =q_k(\eta).
\end{equation}
The denominator is $\pi_k/L>0$.  Therefore every $q_k(P_\eta)$ is
determined by the output law alone, even if the same output law has
more than one environmental representation.

\section{The fixed hypotheses}\label{sec:family}

Let
\[
  \cP:=\{P_\eta:\eta\text{ is an environment law}\}.
\]
Define
\begin{align*}
  H_0&:=\{P\in\cP:q_k(P)\to0\},\\
  H_1&:=\{P\in\cP:q_k(P)\to1\}.
\end{align*}
Lemma~\ref{lem:ergodic} gives $H_0,H_1\subset\cE$.
Equation~\eqref{eq:q-intrinsic} shows that both definitions depend only
on the output law.  The two families are disjoint because a real
sequence cannot converge to both $0$ and $1$.  They are both nonempty.
Indeed, let $y^{(0)}$ be the deterministic environment with
$y_t^k=0$ for all $(t,k)\in\Z\times\N$, and let $y^{(1)}$ be the
deterministic environment with $y_t^k=1$ for all $(t,k)$.  The Dirac laws $\delta_{y^{(0)}}$ and $\delta_{y^{(1)}}$ are
shift-invariant and mixing: each supporting point is fixed by the shift,
and for all measurable $A,B$ the two-set mixing identity holds exactly
for every lag.  Hence they are environment laws.  Their coded output laws satisfy
$q_k(P_{\delta_{y^{(0)}}})=0$ and
$q_k(P_{\delta_{y^{(1)}}})=1$ for every $k$, so they belong to $H_0$
and $H_1$, respectively.

\section{A weakly asymptotically consistent test}\label{sec:weak}

For $n\in\N$, set
\[
  k_n:=\max\left\{1,
  \left\lfloor\frac12\log_2 n\right\rfloor\right\}.
\]
We now define a total deterministic test on every input
$x_1^n\in\{0,1\}^n$.

\begin{enumerate}[label=\textup{\arabic*.},leftmargin=2.2em]
\item If there is no index $r\in\{1,\ldots,n-1\}$ with
      $x_rx_{r+1}=00$, output $0$.
\item Otherwise let $r$ be the first such index.  Starting at $r$,
      attempt to decode consecutive codewords.  At a proposed start,
      a complete valid word must have the form
      $00\,1^k\,01\,d_b$ for a unique $k\ge1$ and
      $b\in\{0,1\}$ and must lie entirely inside $x_1^n$.
      Record every complete valid word and move to the position
      immediately following it.  Stop at the first invalid or
      incomplete attempted word.
\item If no complete valid word was recorded, output $0$.  Otherwise
      discard the first recorded word.  Among the remaining recorded
      words, choose the first whose marker is $k_n$ and output its bit
      $b$.  If there is no such word, output $0$.
\end{enumerate}
This defines a measurable function
\[
  \varphi_n:\{0,1\}^n\to\{0,1\}.
\]
On a genuine coded output, every decoded word is a true codeword: by
synchronization, the first visible $00$ is a true codeword start and
all subsequent complete codewords decode uniquely.

\begin{lemma}[Marker regeneration after the discarded word]
\label{lem:marker-regeneration}
Under $\lambda$,
\begin{equation}\label{eq:length-biased}
  \lambda\{K_0=k\}=\frac{\pi_k\ell(k)}L,
\end{equation}
and the family $(K_t)_{t\ne0}$ is i.i.d.\ with common law $\pi$ and is
independent of the pair $(K_0,j)$.

For a genuine coded output, after the first complete parsed word is
discarded, the successive candidate markers form an i.i.d.\ sequence
with law $\pi$, independent of the environment $y$.
\end{lemma}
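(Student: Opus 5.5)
The plan has two parts: a direct computation on the measure $\lambda$ of \eqref{eq:lambda}, and a translation of that computation into a statement about parsed output. The computation comes first. A generating class for the trace sigma-field on $\widehat\Omega_K$ is given by the events
\[
 \{K_0=k,\ j=i\}\cap\{K_{t}=k_t,\ t\in J\},
\qquad J\subset\Z\setminus\{0\}\text{ finite},\ 0\le i<\ell(k).
\]
I will evaluate $\lambda$ on each such event directly from \eqref{eq:lambda}. The integrand $\sum_{j'}\one_C(k,j')$ equals the indicator of $\{K_0=k\}\cap\{K_t=k_t,\ t\in J\}$, because exactly one level $j'=i$ contributes and $i<\ell(k)$. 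Since $\pi^{\Z}$ is i.i.d., this gives
\[
\lambda(C)=\frac{\pi_k}{L}\prod_{t\in J}\pi_{k_t}.
\]
Summing over $0\le i<\ell(k)$ yields \eqref{eq:length-biased}. The product form shows that $(K_0,j)$ has joint law $\pi_k/L$ on the allowed pairs and is independent of $(K_t)_{t\ne0}$, which is i.i.d.\ with law $\pi$. These events form a $\pi$-system generating the sigma-field, so by the uniqueness clause of the extension theorem the cylinder identity identifies $\lambda$ as the product of the law of $(K_0,j)$ with $\pi^{\Z\setminus\{0\}}$.

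For the second assertion I work on $\widehat\Omega$ with $\widehat\mu_\eta=\eta\otimes\lambda$ from \eqref{eq:factor-lift}. By synchronization, the first occurrence of $00$ in $x_1^n$ is the start of a true codeword. The codeword occupying output position $1$ is indexed by $t=0$, since the state is $(y,k,j)$ with $j$ the offset inside codeword $0$. The first parsed word is therefore codeword $0$ when $j=0$, and codeword $1$ when $j\ge1$. Note that if $j=1$, the pair $x_1x_2$ consists of the second symbol $0$ of codeword $0$ followed by a $1$, so it is not $00$.

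The first parsed word has index $\tau:=\one\{j\ge1\}\in\{0,1\}$, which is a function of $j$ alone. After discarding it, the candidate markers are $K_{\tau+1},K_{\tau+2},\ldots$. Their order is fixed, and their truncation at the horizon $n$ does not matter for identifying the sequence, although the number of complete words within the window depends on the markers themselves. Conditionally on $\tau$, these indices are nonzero and $\ge1$. Using the product structure, I show that for each value of $\tau$ and all finite patterns,
\[
\widehat\mu_\eta\bigl(\{\tau=s\}\cap\{K_{s+m}=k_m,\ 1\le m\le M\}\cap (B\times\Omega')\bigr)
=\widehat\mu_\eta\bigl(\{\tau=s\}\cap (B\times\Omega')\bigr)\prod_{m}\pi_{k_m},
\]
where $B$ ranges over measurable environment events. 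Summing over $s\in\{0,1\}$ gives i.i.d.\ law $\pi$ and independence from $y$.

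The step I expect to be the main obstacle is making precise what ``the successive candidate markers'' are, given that the parser stops at the window edge. I plan to handle this by defining the infinite sequence $(K_{\tau+m})_{m\ge1}$. The test reads an initial segment of this sequence, and that segment is determined by a stopping rule. This suffices for the later use, which is the first $m$ with $K_{\tau+m}=k_n$.

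One subtlety should be checked: whether parsing could begin inside codeword $0$ at $j\ge2$. A $00$ occurs only at codeword starts, so the first occurrence is at the start of codeword $1$ whenever $j\ge1$. It also does not matter that codeword $1$ might be incomplete, since in that case nothing is recorded.
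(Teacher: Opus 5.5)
Your proposal is correct and follows essentially the same route as the paper. You compute $\lambda$ on events that fix $(K_0,j)$ and obtain the product form $\frac{\pi_k}{L}\,\pi^{\Z\setminus\{0\}}$; the paper does this directly for arbitrary events in $(K_t)_{t\ne0}$ instead of using cylinders plus a $\pi$-system uniqueness step, which is only a cosmetic difference. You then observe that the discarded word has base index $\one\{j\ge1\}$, so the candidate markers are $(K_1,K_2,\ldots)$ or $(K_2,K_3,\ldots)$, independent of $(K_0,j)$ and, via $\widehat\mu_\eta=\eta\otimes\lambda$, of $y$ — exactly the paper's argument.
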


\begin{proof}
For $0\le u<\ell(k)$ and for every event $B$ depending only on
$(K_t)_{t\ne0}$, definition~\eqref{eq:lambda} gives
\[
  \lambda\{K_0=k,j=u,(K_t)_{t\ne0}\in B\}
  =\frac{\pi_k}{L}\,\pi^{\Z\setminus\{0\}}(B).
\]
Summing over $u$ proves \eqref{eq:length-biased} and the stated
independence.

If the initial tower level is $j=0$, the first complete visible
codeword has base index $0$ and, after discarding it, the candidate
indices are $1,2,\ldots$.  If $j>0$, the first complete visible
codeword has base index $1$ and, after discarding it, the candidate
indices are $2,3,\ldots$.  Thus the candidate sequence is either
$(K_1,K_2,\ldots)$ or $(K_2,K_3,\ldots)$, with the choice determined by
$(K_0,j)$, which is independent of the future i.i.d.\ marker sequence.
In either case it is i.i.d.\ with law $\pi$.  Independence from the
environment follows from \eqref{eq:factor-lift}.
\end{proof}

For a genuine output prefix $X_1^n$, let $D_n$ denote the number of
complete parsed words remaining after the first complete parsed word
is discarded; set $D_n=0$ when fewer than two complete words are
parsed.

\begin{lemma}[Linear number of usable words]\label{lem:count}
For every $P_\eta\in\cP$,
\[
  \frac{D_n}{n}\longrightarrow\frac1L
  \qquad\widehat\mu_\eta\text{-almost surely}.
\]
The variable $D_n$ is a measurable function of $X_1^n$ by the parser
definition.  Consequently, the same convergence holds $P_\eta$-almost
surely as a statement about the output sequence.
\end{lemma}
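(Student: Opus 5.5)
The plan is to count codeword starts in the window $\{1,\ldots,n\}$ via the return time to level zero, apply Birkhoff's theorem (Theorem~\ref{thm:birkhoff}) to the ergodic tower of Lemma~\ref{lem:ergodic}, and then show that $D_n$ differs from this count by a bounded number plus a boundary term that is $o(n)$. Let $N_n:=\sum_{i=0}^{n-1}\one_{\{j=0\}}\circ S^i$. This counts the indices $i\in\{1,\ldots,n\}$ for which $S^{i-1}(y,k,j)$ lies at level zero, i.e.\ the codeword starts at output positions $1,\ldots,n$. The function $\one_{\{j=0\}}$ is bounded, and $\widehat\mu_\eta\{j=0\}=1/L$ by \eqref{eq:start-intensity}. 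Since $\widehat\mu_\eta$ is $S$-invariant and ergodic, Theorem~\ref{thm:birkhoff} gives $N_n/n\to1/L$ almost surely.

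Next I would relate $D_n$ to $N_n$ on a genuine output. By the synchronizing property of the code, the first occurrence of $00$ at index $r\le n-1$ is exactly the first codeword start in the window. The exception is $r=n$ with only a single visible $0$; that case contributes at most one start. Every codeword that starts in the window and lies entirely inside $x_1^n$ is parsed as valid, and parsing stops only at the last, incomplete codeword. Hence the number of complete parsed words equals $N_n$ minus at most one unfinished final word, possibly minus the start at $r=n$. After discarding the first word this gives
\[
 N_n-3\le D_n\le N_n .
\]
To make this airtight I would check that the parser never rejects a true complete codeword. This holds because a true codeword is exactly of the form $00\,1^k\,01\,d_b$, and decoding from a true start is unique. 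The lower bound never needs the length of the final partial word, so no control of large markers is needed at this step.

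Dividing by $n$ gives $D_n/n\to1/L$ $\widehat\mu_\eta$-almost surely. For the output statement, $D_n$ is by definition a fixed function $g_n(x_1^n)$ computed by the parser, and $\{x:g_n(x_1^n)/n\to1/L\}$ is a measurable subset $A$ of $\cX$. Its $\Phi^+$-preimage contains the $\widehat\mu_\eta$-conull set just found, so $P_\eta(A)=\widehat\mu_\eta((\Phi^+)^{-1}A)=1$. The main obstacle is the combinatorial step: verifying that the parser's count matches $N_n$ up to a bounded error. This needs care at both boundaries, namely a start before position $1$ whose tail is visible, and the incomplete final word. I would settle it by invoking the fact that occurrences of $00$ in any concatenation are exactly codeword starts, together with the fact that every codeword ends in $1$, so a spurious $00$ cannot straddle a codeword boundary.
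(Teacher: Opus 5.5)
Your proof is correct and follows the paper's approach: Birkhoff's theorem on the ergodic tower applied to a count of codeword starts, plus a deterministic bounded discrepancy between that count and $D_n$. The only difference is cosmetic. You count level-zero visits $\sum_{i=0}^{n-1}\one_{\{j=0\}}\circ S^i$, whereas the paper counts occurrences of $00$ in $X_1^n$; by synchronization that is the same sum with the final term dropped, which is why you get the constant $3$ where the paper gets $2$.
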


\begin{proof}
Let
\[
  N_n:=\sum_{i=1}^{n-1}\one_{\{X_iX_{i+1}=00\}}.
\]
By synchronization, $N_n$ is the number of codeword starts whose first
two symbols are visible in $X_1^n$.  At most the last such start can
belong to a codeword that is not completely contained in the prefix.
After removing that possible incomplete word and discarding the first
complete word, one has the deterministic bound
\[
  |D_n-N_n|\le2.
\]
Define on the tower
\[
  f(\omega):=\one_{\{F(\omega)=0,\ F(S\omega)=0\}}.
\]
Then $f$ is bounded and measurable,
$\int f\,d\widehat\mu_\eta=P_\eta([00])=1/L$ by
\eqref{eq:start-intensity}, and
\[
  N_n=\sum_{i=0}^{n-2} f\circ S^i.
\]
By Lemma~\ref{lem:ergodic}, $S$ is ergodic.  Therefore
Theorem~\ref{thm:birkhoff}, first with denominator $n-1$ and then using
$(n-1)/n\to1$, gives
\[
  \frac{N_n}{n}\longrightarrow\frac1L
  \qquad\widehat\mu_\eta\text{-almost surely}.
\]
The bounded difference proves the assertion for $D_n$.
\end{proof}

\begin{proposition}\label{prop:weak}
The test $(\varphi_n)$ is weakly asymptotically consistent for $H_0$
against $H_1$.
\end{proposition}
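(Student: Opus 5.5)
Fix $i\in\{0,1\}$ and $P=P_\eta\in H_i$. By \eqref{eq:q-intrinsic}, $q_k(\eta)\to i$. I will work on the tower $(\widehat\Omega,\widehat\mu_\eta)$ and use $P_\eta\{\varphi_n\ne i\}=\widehat\mu_\eta\{\varphi_n(X_1^n)\ne i\}$. On a genuine coded output, Step~1 and the ``no complete word'' branch of Step~3 can fire only when $D_n=0$. Otherwise the output is the bit $b=y_t^{k_n}$ of the first non-discarded parsed word whose marker equals $k_n$, or $0$ if no such word exists. I therefore split the error event into two pieces: the event $A_n$ that no parsed word among the $D_n$ usable ones has marker $k_n$, and the event $B_n$ that such a word exists but its bit is wrong.

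\textbf{Bounding $A_n$.} By Lemma~\ref{lem:count}, $\widehat\mu_\eta\{D_n<n/(2L)\}\to0$. By Lemma~\ref{lem:marker-regeneration}, the candidate markers after the discarded word are i.i.d.\ with law $\pi$. The index sequence $(K_1,K_2,\ldots)$ or $(K_2,\ldots)$ is selected by $(K_0,j)$, which is independent of the future markers. Hence the probability that none of the first $m:=\lceil n/(2L)\rceil$ candidates has marker $k_n$ is $(1-2^{-k_n})^m\le\exp(-2^{-k_n}m)$. Since $2^{k_n}\le\sqrt n$, this is at most $\exp(-\sqrt n/(2L))\to0$.

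A subtlety remains: $D_n$ depends on the markers themselves. I avoid any conditioning on $D_n$ by bounding $A_n$ by the union of $\{D_n<m\}$ and the event that none of the first $m$ candidate markers equals $k_n$. That candidate event is defined in terms of the fixed index sequence, so its probability is exactly the product computed above.

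\textbf{Bounding $B_n$; this is the key step and where independence is essential.} Let $\tau$ be the index of the first candidate with marker $k_n$; it is a function of the markers only. The reported bit is $y_\tau^{k_n}$. By \eqref{eq:factor-lift} the environment is independent of $(K,j)$. Summing over the values $\tau=t$ gives
\[
\widehat\mu_\eta\{\tau=t,\ y_t^{k_n}\ne i\}=\lambda\{\tau=t\}\,\eta\{y_t^{k_n}\ne i\}.
\]
By $\sigma$-invariance of $\eta$, $\eta\{y_t^{k_n}\ne i\}=\eta\{y_0^{k_n}\ne i\}$. This equals $1-q_{k_n}(\eta)$ if $i=1$ and $q_{k_n}(\eta)$ if $i=0$. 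Summing over $t$, $\widehat\mu_\eta(B_n)\le|i-q_{k_n}(\eta)|$. Because $k_n\to\infty$ and $q_k(\eta)\to i$, this bound tends to $0$.

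To justify the factorization rigorously, I write the event $\{\tau=t\}$ as a measurable set of $(k,j)$. This works because the parsed words on a genuine output are exactly the true codewords, by synchronization and the tower structure. The event $\{y_t^{k_n}\ne i\}$ is measurable in $y$, so the factorization follows from the product structure in \eqref{eq:factor-lift}.

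\textbf{Conclusion.} The two bounds give $P_\eta\{\varphi_n\ne i\}\le\widehat\mu_\eta(A_n)+\widehat\mu_\eta(B_n)\to0$, which is weak consistency. The main obstacle is the bookkeeping in the middle: checking that the parsed word order matches the tower indices $1,2,\ldots$ or $2,3,\ldots$ exactly. This includes verifying that the word count $D_n$ never truncates before index $\tau$ on the event $\{\tau\le m\}\cap\{D_n\ge m\}$, which holds because the complete words are consecutive.
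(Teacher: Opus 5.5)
Your proposal is correct and follows essentially the same route as the paper. Lemma~\ref{lem:count} and the i.i.d.\ marker regeneration show that a usable word with marker $k_n$ appears with high probability, and the product structure \eqref{eq:factor-lift} together with stationarity of $\eta$ shows that the reported bit has mean $q_{k_n}$. The remaining differences are cosmetic: you take $\lceil n/(2L)\rceil$ where the paper takes $\lfloor n/(2L)\rfloor$, and you bound both error types by $A_n\cup B_n$, whereas the paper handles $H_0$ directly via $\widehat\mu_\eta(\varphi_n=1)\le q_{k_n}$.
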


\begin{proof}
Fix $P_\eta\in\cP$ and abbreviate $q_k=q_k(P_\eta)$.  Put
\[
  m_n:=\left\lfloor\frac{n}{2L}\right\rfloor.
\]
Since $m_n/n\to1/(2L)<1/L$, Lemma~\ref{lem:count} implies
\begin{equation}\label{eq:D-large}
  \widehat\mu_\eta(D_n<m_n)\longrightarrow0.
\end{equation}
Apply the parser to a genuine tower output.  Because the bit
$b$ does not affect a codeword's length, the complete parsed-word
boundaries, the usable base-time indices, and their marker values are
functions of the marker-suspension state $(K,j)$ alone.  Define
$\bar G_n\subset\widehat\Omega_K$ to be the event that at least one
usable parsed word has marker $k_n$.  Define the measurable function
\[
 \bar J_n:\widehat\Omega_K\to\N_0
\]
to be $0$ on $\bar G_n^c$ and, on $\bar G_n$, to be the base-time
index of the first usable word with marker $k_n$.  Measurability follows
directly from the finite deterministic parsing algorithm: for a fixed
$n$, at most $n$ codeword starts can meet coded times
$0,\ldots,n-1$, and every decision uses only finitely many coordinates
of $(K,j)$.  In particular,
$\bar J_n\in\{1,\ldots,n\}$ on $\bar G_n$.

On the full tower set
\[
 G_n:=\Omega_Y\times\bar G_n,
 \qquad
 J_n(y,K,j):=\bar J_n(K,j).
\]
Thus $G_n$ and $J_n$ are explicitly typed objects on
$\widehat\Omega=\Omega_Y\times\widehat\Omega_K$.  On
$\{D_n\ge m_n\}\cap G_n^c$, the first $m_n$ candidate markers all
differ from $k_n$.  Lemma~\ref{lem:marker-regeneration} therefore yields
\begin{align}
  \widehat\mu_\eta(G_n^c)
  &\le \widehat\mu_\eta(D_n<m_n)
       +(1-\pi_{k_n})^{m_n}\notag\\
  &\le \widehat\mu_\eta(D_n<m_n)
       +\exp(-m_n2^{-k_n})
  \longrightarrow0.                                  \label{eq:seen}
\end{align}
Indeed, for all sufficiently large $n$,
$k_n=\lfloor\tfrac12\log_2n\rfloor$, so
$2^{-k_n}\ge n^{-1/2}$ and
$m_n2^{-k_n}\to\infty$.

By \eqref{eq:factor-lift}, the marker-suspension coordinate $(K,j)$ is
independent of the environment $y$.  The bit output by the test on
$G_n$ is exactly $y_{J_n}^{k_n}$.  Since $\bar J_n$ has finite range
and $\eta$ is stationary,
\begin{align}
 \widehat\mu_\eta(G_n\cap\{y_{J_n}^{k_n}=1\})
 &=\sum_{t=1}^{n}
   \lambda(\bar G_n\cap\{\bar J_n=t\})\,
   \eta\{y:y_t^{k_n}=1\}\notag\\
 &=q_{k_n}\lambda(\bar G_n)
  =q_{k_n}\widehat\mu_\eta(G_n).             \label{eq:random-index}
\end{align}
Consequently,
\[
  \widehat\mu_\eta(\varphi_n=1)
  =q_{k_n}\widehat\mu_\eta(G_n)\le q_{k_n}.
\]
If $P_\eta\in H_0$, then $k_n\to\infty$ and the right-hand side tends
to $0$.

If $P_\eta\in H_1$, then
\[
  \widehat\mu_\eta(\varphi_n=0)
  =\widehat\mu_\eta(G_n^c)
   +(1-q_{k_n})\widehat\mu_\eta(G_n)
  \le \widehat\mu_\eta(G_n^c)+1-q_{k_n}\longrightarrow0
\]
by \eqref{eq:seen}.  Output-measurable events have the same probability
under $\widehat\mu_\eta$ and $P_\eta$, proving the proposition.
\end{proof}

\section{No strongly asymptotically consistent test exists}
\label{sec:no-strong}

\subsection{A marker-tail estimate independent of the environment}

For a marker-suspension state $(K,j)$, define the coded-time start
of the current codeword by $\tau_0:=-j$, and recursively define
\[
  \tau_{t+1}:=\tau_t+\ell(K_t),\qquad t\ge0.
\]
Thus codeword $t$ occupies coded times
$\tau_t,\tau_t+1,\ldots,\tau_{t+1}-1$ and has marker $K_t$.
Coded time $0$ lies in codeword $0$, because
$\tau_0=-j\le0<\ell(K_0)-j=\tau_1$.  The preceding codeword, of base
index $-1$, ends at time $\tau_0-1<0$, and therefore no codeword with a
negative base index can intersect a nonnegative coded time.
For $n,b\in\N$, let $R(n,b)\subset\widehat\Omega_K$ be the
measurable event that $K_t\le b$ for every $t\ge0$
whose codeword interval intersects $\{0,1,\ldots,n-1\}$.  Explicitly,
\[
 R(n,b)=\bigcap_{t\ge0}
 \left(
   \{\tau_t>n-1\}\cup\{\tau_{t+1}-1<0\}\cup\{K_t\le b\}
 \right),
\]
so measurability follows because each $\tau_t$ is a finite sum of
coordinate-measurable functions.  Since
$\tau_1=\ell(K_0)-j\ge1$ and every codeword has positive length,
$\tau_t\ge t$ for $t\ge1$.  Therefore every codeword intersecting
these $n$ coded times has base index in $\{0,1,\ldots,n\}$.  Hence
\[
  R(n,b)^c
  \subset
  \{K_0>b\}\cup\bigcup_{t=1}^{n}\{K_t>b\}.
\]
By \eqref{eq:length-biased} and the union bound,
\begin{equation}\label{eq:tail-bound}
  \lambda(R(n,b)^c)
  \le \frac1L\sum_{k>b}\pi_k\ell(k)
      +n\sum_{k>b}\pi_k.
\end{equation}
For every fixed $n$, the right-hand side tends to zero as
$b\to\infty$, because $\sum_k\pi_k\ell(k)=L<\infty$ and
$\sum_k\pi_k=1$.

\subsection{Recursive construction against an arbitrary test}

The sets $H_0$ and $H_1$ were defined in
Section~\ref{sec:family} without reference to any test.  Now let
\[
  \Gamma_n:\{0,1\}^n\to\{0,1\},\qquad n\in\N,
\]
be an arbitrary test.  Assume, for a contradiction, that it is
strongly asymptotically consistent for this already fixed pair
$(H_0,H_1)$.

At stage $s$, the provisional alternative $Q_s$ first selects a favorable
time $n_s$; the marker cutoff $b_s$ then shields that prefix from all
later coordinate blocks, and the new chain $Z^s$ is finally made
persistent enough to imitate $Q_s$ at that time with high conditional
probability.

Set
\[
  \varepsilon_s:=2^{-s-6},
  \qquad p_s:=\frac1{s+1},
  \qquad s\in\N,
\]
and $n_0:=0$.  We recursively construct consecutive finite intervals
\[
  I_s=[a_s,b_s]\cap\N,
  \qquad 1=a_1\le b_1<a_2\le b_2<\cdots,
  \qquad a_{s+1}:=b_s+1,
\]
strictly increasing times $n_s$, and independent stationary mixing
binary Markov chains $Z^s=(Z_t^s)_{t\in\Z}$.

Suppose $I_r$ and $Z^r$ have been defined for $r<s$.  By
consecutiveness, $I_1\sqcup\cdots\sqcup I_{s-1}=\{1,\ldots,a_s-1\}$.
For $s=1$, the product of the previously constructed chains is
understood as the one-point probability-preserving system and the
preceding union is empty.  Define the provisional environment
\[
  y_t^{+,s,k}:=
  \begin{cases}
    Z_t^r,&k\in I_r\text{ for some }r<s,\\
    1,&k\ge a_s.
  \end{cases}
\]
Let $\eta_s^+$ be its law.  It is an environment law: it is a
shift-commuting factor of the finite product of the mixing chains
$Z^1,\ldots,Z^{s-1}$, with deterministic coordinates adjoined, so
Lemma~\ref{lem:mixing-products} applies.  Let
\[
  Q_s:=P_{\eta_s^+}.
\]
For every $k\ge a_s$, $q_k(Q_s)=1$; hence $Q_s\in H_1$.
Strong consistency under $Q_s$ gives
\[
  \Gamma_n(X_1^n)\longrightarrow1
  \qquad Q_s\text{-almost surely}.
\]
Apply Theorem~\ref{thm:bounded-convergence} to the indicators
$\one_{\{\Gamma_n(X_1^n)=1\}}$, which converge almost surely to
$1$ and are bounded by $1$.  It follows that
$Q_s\{\Gamma_n=1\}\to1$.  Choose $n_s>n_{s-1}$ such that
\begin{equation}\label{eq:Qs}
  Q_s\{\Gamma_{n_s}=1\}>1-\varepsilon_s.
\end{equation}
By \eqref{eq:tail-bound}, choose $b_s\ge a_s$ such that
\begin{equation}\label{eq:Rs}
  \lambda(R(n_s,b_s)^c)<\varepsilon_s,
\end{equation}
and set $I_s=[a_s,b_s]\cap\N$.

Choose $\delta_s\in(0,1)$ sufficiently small that
\begin{equation}\label{eq:persist}
  (1-\delta_s)^{n_s}>1-\varepsilon_s
\end{equation}
and
\begin{equation}\label{eq:transition-bound}
  \alpha_s:=\frac{p_s\delta_s}{1-p_s}<1.
\end{equation}
Both inequalities hold for every sufficiently small positive
$\delta_s$.  Consider the two-state transition matrix, in the state
order $(0,1)$,
\[
  M_s=
  \begin{pmatrix}
    1-\alpha_s&\alpha_s\\
    \delta_s&1-\delta_s
  \end{pmatrix}.
\]
Its stationary distribution is $\nu_s=(1-p_s,p_s)$ because
$(1-p_s)\alpha_s=p_s\delta_s$.  Both off-diagonal entries and both
diagonal entries lie in $(0,1)$, so the chain is irreducible and
aperiodic.  More explicitly, the two eigenvalues of $M_s$ are $1$ and
$\lambda_s:=1-\alpha_s-\delta_s\in(-1,1)$.  Let $\Pi_s:=\mathbf 1\nu_s$, where $\mathbf 1$ is the
two-dimensional column vector of ones.  Direct multiplication gives
$\Pi_s^2=\Pi_s$, $M_s\Pi_s=\Pi_sM_s=\Pi_s$, and
\[
 M_s=\Pi_s+\lambda_s(I-\Pi_s).
\]
Since $\Pi_s(I-\Pi_s)=(I-\Pi_s)\Pi_s=0$, induction gives
\[
  M_s^m=\Pi_s+\lambda_s^m(I-\Pi_s)
       =\mathbf 1\nu_s+\lambda_s^m(I-\mathbf 1\nu_s).
\]  Hence
\[
  M_s^m(a,b)\longrightarrow\nu_s(b)
  \qquad(a,b\in\{0,1\}).
\]
Lemma~\ref{lem:bilateral-markov} therefore supplies a unique stationary
bilateral Markov law with transition matrix $M_s$ and marginal
$\nu_s$, and its bilateral shift is mixing.  Let $Z^s$ have this law,
independently of all previously constructed chains.  Such independent
realizations exist on the product of their canonical path spaces.
This completes the recursive step; induction therefore defines all
$I_s$, $n_s$, and $Z^s$.

The intervals are finite and nonempty because
$a_s\le b_s<\infty$; they are consecutive and disjoint because
$a_{s+1}=b_s+1$.  Inductively, $a_s\ge s$, so $a_s\to\infty$.  For any
$k\in\N$, the set $\{s:a_s\le k\}$ is finite and nonempty; let $r$ be
its maximum.  Then $k<a_{r+1}=b_r+1$, so $a_r\le k\le b_r$ and
$k\in I_r$.  Thus
\begin{equation}\label{eq:partition-N}
  \N=\bigsqcup_{s\ge1}I_s.
\end{equation}

\subsection{The final null environment is mixing}

Let
\[
  \Omega_Z:=\prod_{s\ge1}\{0,1\}^{\Z},
  \qquad
  \zeta:=\bigotimes_{s\ge1}\law(Z^s).
\]
This countable product law exists by
Theorem~\ref{thm:kolmogorov-extension}: for each finite set of pairs
$(s,t)\in\N\times\Z$, take the product, over the finitely many
indices $s$ involved, of the corresponding finite-dimensional Markov
laws.  These finite-dimensional laws are projectively consistent and
make the coordinate path processes $Z^s$ mutually independent with the
prescribed laws.
Let the product shift act coordinatewise.  By
Lemma~\ref{lem:mixing-products}, $\zeta$ is mixing.  Define the
measurable shift-commuting map $\Psi:\Omega_Z\to\Omega_Y$ by
\[
  \bigl(\Psi((z^s)_{s\ge1})\bigr)_t^k:=z_t^s
  \qquad\text{when }k\in I_s,
\]
where $s$ is unique by \eqref{eq:partition-N}.  Each output
coordinate of $\Psi$ is a coordinate projection, so $\Psi$ is
measurable; it also satisfies $\Psi\circ\sigma_Z=\sigma\circ\Psi$,
where $\sigma_Z$ is the coordinatewise product shift.  Put
\[
  \eta^*:=\Psi_\#\zeta.
\]
By Lemma~\ref{lem:mixing-products}(b), $\eta^*$ is mixing; the
pushforward and intertwining identities make it shift-invariant.  Thus
$\eta^*$ is an environment law.

Let
\[
  P^*:=P_{\eta^*}.
\]
For $k\in I_s$,
\[
  q_k(P^*)=q_k(\eta^*)=\Prob(Z_0^s=1)=p_s.
\]
If $k\to\infty$ and $k\in I_{s(k)}$, then necessarily
$s(k)\to\infty$ because every finite union
$I_1\cup\cdots\cup I_r$ is bounded.  Since $p_s\to0$,
\begin{equation}\label{eq:Pstar-H0}
  q_k(P^*)\longrightarrow0,
  \qquad\text{so }P^*\in H_0.
\end{equation}

\subsection{Coupling the final null with the provisional alternatives}

Place all random objects on the product probability space
\[
  (\Omega_Z\times\widehat\Omega_K,
   \zeta\otimes\lambda),
\]
and denote its probability by $\widetilde\Prob$.  Let $(K,j)$ be the
marker-suspension coordinate.  From the chain coordinates construct
the final environment $y^*=\Psi((Z^s)_s)$ and, using the common
$(K,j)$, construct its coded output $X^*$.  By
\eqref{eq:factor-lift}, $X^*$ has law $P^*$.

For each $s$, construct on the same space the provisional environment
\[
  y_t^{+,s,k}:=
  \begin{cases}
    Z_t^r,&k\in I_r\text{ for some }r<s,\\
    1,&k\ge a_s,
  \end{cases}
\]
and use the same $(K,j)$ to construct its coded output $X^{+,s}$.
The coordinates $Z^1,\ldots,Z^{s-1}$ have exactly the independent
product law used to define $\eta_s^+$ and are independent of
$(K,j)\sim\lambda$; hence $X^{+,s}$ has law $Q_s$.

Define
\[
  E_s:=\{Z_t^s=1\text{ for every }t=0,1,\ldots,n_s\},
\]
\[
  A_s^*:=\{\Gamma_{n_s}((X^*)_1^{n_s})=1\},
  \qquad
  A_s^+:=\{\Gamma_{n_s}((X^{+,s})_1^{n_s})=1\}.
\]
The event $R(n_s,b_s)$ was defined on $\widehat\Omega_K$.  On the
present coupling space we use its explicit pullback
\[
  R_s:=\Omega_Z\times R(n_s,b_s)
  =\{(z,(K,j)):(K,j)\in R(n_s,b_s)\}.
\]

The events $E_s$ are independent, because each depends on a distinct
chain.  Stationarity and the Markov property give
\begin{equation}\label{eq:E-prob}
  \widetilde\Prob(E_s)
  =p_s(1-\delta_s)^{n_s}
  >p_s(1-\varepsilon_s).
\end{equation}
Since $\varepsilon_s<1/2$ and
$\sum_s p_s=\infty$, one has
$\sum_s\widetilde\Prob(E_s)=\infty$.  Part~(b) of Theorem~\ref{thm:borel-cantelli} therefore yields
\begin{equation}\label{eq:E-io}
  \widetilde\Prob(E_s\ \io)=1.
\end{equation}

We next prove exact prefix agreement on $E_s\cap R_s$.  In the shared
suspension state, coded time $0$ lies in base word $0$, and every word
with a negative base index ends before coded time $0$, as established
above.  Also $\tau_t\ge t$ for $t\ge1$.  Hence every codeword
intersecting coded times $0,\ldots,n_s-1$ has a base index
$t\in\{0,\ldots,n_s\}$.  On $R_s$, its marker $k$ is at most $b_s$.
If $k<a_s$, then $k\in I_r$ for some $r<s$, and the final and
provisional environments both use $Z_t^r$.  If
$a_s\le k\le b_s$, then $k\in I_s$; on $E_s$, the final environment
uses $Z_t^s=1$, while the provisional environment also uses $1$.
Thus every codeword contributing a symbol to the first $n_s$ outputs
is identical in the two constructions, and
\begin{equation}\label{eq:prefix-agreement}
  (X^*)_1^{n_s}=(X^{+,s})_1^{n_s}
  \qquad\text{on }E_s\cap R_s.
\end{equation}
It follows that
\[
  (A_s^*)^c\cap E_s
  \subset
  (R_s^c\cap E_s)\cup((A_s^+)^c\cap E_s).
\]

The event $E_s$ is measurable with respect to the path coordinate
$Z^s$ alone.  The event $R_s$ is measurable with respect to $(K,j)$
alone, and $A_s^+$ is measurable with respect to
$(K,j),Z^1,\ldots,Z^{s-1}$.  These coordinate sigma-fields are
independent under $\zeta\otimes\lambda$.  Therefore both $R_s$ and
$A_s^+$ are independent of $E_s$.  Since $X^{+,s}$ has law $Q_s$ and
$\widetilde\Prob(E_s)>0$,
\begin{align}
 \widetilde\Prob((A_s^*)^c\mid E_s)
 &\le \widetilde\Prob(R_s^c\mid E_s)
      +\widetilde\Prob((A_s^+)^c\mid E_s)\notag\\
 &=\lambda\bigl(R(n_s,b_s)^c\bigr)
      +Q_s\{\Gamma_{n_s}=0\}
 <2\varepsilon_s,                                  \label{eq:conditional-error}
\end{align}
using \eqref{eq:Qs} and \eqref{eq:Rs}.  Hence
\begin{align*}
 \sum_{s\ge1}\widetilde\Prob(E_s\cap(A_s^*)^c)
 &=\sum_{s\ge1}\widetilde\Prob(E_s)
   \widetilde\Prob((A_s^*)^c\mid E_s)\\
 &\le\sum_{s\ge1}2\varepsilon_s
 <\infty.
\end{align*}
By part~(a) of Theorem~\ref{thm:borel-cantelli},
\[
  \widetilde\Prob(E_s\cap(A_s^*)^c\ \io)=0.
\]
Together with \eqref{eq:E-io}, this implies
\begin{equation}\label{eq:A-io}
  \widetilde\Prob(A_s^*\ \io)=1.
\end{equation}
Indeed, on almost every sample path, $E_s$ occurs infinitely often,
while only
finitely many of those occurrences can be accompanied by
$(A_s^*)^c$.

But $X^*$ has law $P^*\in H_0$.  Strong consistency of $\Gamma$ on
$H_0$ would imply that, almost surely, there is an $N$ such that
$\Gamma_n((X^*)_1^n)=0$ for every $n\ge N$.  Since $(n_s)$ is a strictly increasing sequence of integers, it tends
to infinity; this would force
$\widetilde\Prob(A_s^*\ \io)=0$, contradicting
\eqref{eq:A-io}.  Thus the arbitrary test $\Gamma$ cannot be strongly
asymptotically consistent for $(H_0,H_1)$.

Proposition~\ref{prop:weak} and the preceding contradiction prove
Theorem~\ref{thm:main}.

\section{Conclusion}

The pair $(H_0,H_1)$ is fixed independently of any candidate strong
test.  The construction therefore gives a counterexample to the
unrestricted asymptotic-consistency implication in Ryabko's
Conjecture~5.1: weak asymptotic consistency does not, in general, imply
strong asymptotic consistency for arbitrary disjoint families of
stationary ergodic process distributions.  The result does not address
variants in which additional regularity conditions are imposed on the
hypothesis families.

The weak test succeeds because, at sample size $n$, it requires only
one observation from a marked coordinate $k_n\to\infty$, whose marginal
mean converges to the label of the corresponding hypothesis.  In
contrast, every putative strongly consistent test is defeated by a
process in the fixed null family constructed from independent slowly
switching Markov chains whose states are replicated across successive
coordinate blocks.  The stationary one-probabilities
$p_s\downarrow0$ are chosen to be nonsummable, while the one-phases are
made sufficiently persistent.  These phases occur infinitely often
and cause the process to imitate suitable members of $H_1$ along an
unbounded sequence of sample sizes, even though its intrinsic
coordinate means converge to zero and its law belongs to $H_0$.

\end{document}